\newtheorem{theorem}{Theorem}[section]
\newtheorem{lemma}[theorem]{Lemma}
\newtheorem{proposition}[theorem]{Proposition}
\newtheorem{corollary}[theorem]{Corollary}
\newtheorem{question}[theorem]{Question}
\theoremstyle{definition}
\newtheorem{definition}[theorem]{Definition}
\newtheorem{example}[theorem]{Example}
\theoremstyle{remark}
\newtheorem{remark}[theorem]{Remark}
\numberwithin{equation}{section}
\begin{document}
	
	\title[Misiurewicz Polynomials]{A note on Misiurewicz Polynomials}
	
	\author[V.Goksel]{Vefa Goksel}
	\address{Mathematics Department\\ University of Wisconsin\\
		Madison\\
		WI 53706, USA}
	\email{goksel@math.wisc.edu}

	\subjclass[2000]{Primary }
	
	\keywords{iteration, post-critically finite, Misiurewicz point}
	
	\date{}
	
	\dedicatory{}
	
	\begin{abstract}
	Let $f_{c,d}(x)=x^d+c\in \mathbb{C}[x]$. The $c_0$ values for which $f_{c_0,d}$ has a strictly pre-periodic finite critical orbit are called Misiurewicz points. Any Misiurewicz point lies in $\bar{\mathbb{Q}}$. Suppose that the Misiurewicz points $c_0,c_1\in \bar{\mathbb{Q}}$ are such that the polynomials $f_{c_0,d}$ and $f_{c_1,d}$ have the same orbit type. One classical question is whether $c_0$ and $c_1$ need to be Galois conjugates or not. Recently there has been a partial progress on this question by several authors. In this note, we prove some new results when $d$ is a prime. All the results known so far were in the cases of period size at most $3$. In particular, our work is the first to say something provable in the cases of period size greater than $3$.
	\end{abstract}
	\subjclass[2010]{Primary 11R09, 37P15}
	
	\keywords{iteration, post-critically finite, Misiurewicz point}
	
	\date{}
	\maketitle
    \section{Introduction}
    
    Let $f(x)\in \mathbb{C}[x]$ be a polynomial of degree $d\geq 2$. We denote by $f^n(x)$ the $n$th iterate of $f(x)$ for $n\geq 1$. We also make the convention that $f^0(x)=x$. For a given $c\in \mathbb{C}$, the orbit of $c$ under $f$ is defined to be the set
    $$O_f(c) = \{f(c),f^2(c),\dots\}.$$

    The polynomial $f$ is called \textbf{post-critically finite} (\textbf{PCF}) if this orbit is finite for every critical point of $f$. Most polynomials are not post-critically finite, so such polynomials are rather special. In this paper, we will consider an even more special case, namely post-critically finite polynomials of the form $x^d+c\in \mathbb{C}[x]$, where $d\geq 2$. From now on, we set $f_{c,d}(x)=x^d+c$. Polynomials in this family are particularly nice, because they all have the unique critical point $0$.\\
  
    Now suppose $f_{c,d}$ is PCF, i.e. there exist integers $m,n\in \mathbb{Z}$ with $n\neq 0$ such that $f_{c,d}^m(0)=f_{c,d}^{m+n}(0)$. We say that $\boldsymbol{f_{c,d}}$ \textbf{has exact type} $\boldsymbol{(m,n)}$ if $n$ is the minimal positive integer such that $f_{c,d}^m(0)=f_{c,d}^{m+n}(0)$ and $f_{c,d}^k(0) \neq f_{c,d}^{k+n}(0)$ for any $k<m$. It is easy to see that if $m\neq 0$, then $m$ has to be at least $2$. A number $c_0$ for which $f_{c_0,d}$ has type $(m,n)$ with $m\geq 2$ is called a \textbf{Misiurewicz point of type} $\boldsymbol{(m,n)}$. Any Misiurewicz point of type $(m,n)$ is a root of a polynomial $G_{d,m,n}(c)\in \mathbb{Z}[c]$, which we call the \textbf{Misiurewicz polynomial of type} $\boldsymbol{(m,n)}$. So, in particular, all Misiurewicz points lie in $\overline{\mathbb{Q}}$.\\
    
    It is straightforward to check that for $c_0,c_1\in \mathbb{C}$, the polynomials $f_{c_0,d}$ and $f_{c_1,d}$ are affine conjugate to each other if and only if $c_0^{d-1}=c_1^{d-1}$. Milnor \cite{Milnor} asked the following question.
    \begin{question}
    Suppose that $f_{c_0,d}$ and $f_{c_1,d}$ have the same exact type $(m,n)$. Does it follow that $c_0^{d-1}$ and $c_1^{d-1}$ are Galois conjugates?
    \end{question}
In this note, we will study the following question, which is a more general version of Question $1.1$. It appears in (Question $9.8$, \cite{Benedetto}) in a different form. 
\begin{question}
	Suppose that $f_{c_0,d}$ and $f_{c_1,d}$ have the same exact type $(m,n)$. Does it follow that $c_0$ and $c_1$ are Galois conjugates?
\end{question}
    Before we talk about some recent partial progress on these questions, let us first precisely define the polynomial $G_{d,m,n}(c)$.
    \begin{definition}
    \cite{Silverman} We set $G_{d,0,n}(c) = \prod_{k|n} (f_{c,d}^k(0))^{\mu(\frac{n}{k})}$. For $m\neq 0$, we define $G_{d,m,n}(c)$ as follows: We first set
    $$F_{d,m,n}(c) = \prod_{k|n}\left(\frac{f_{c,d}^{m+k}(0)-f_{c,d}^m(0)}{f_{c,d}^{m-1+k}(0)-f_{c,d}^{m-1}(0)}\right)^{\mu(\frac{n}{k})}.$$
    Then, for $m\geq 2$, we define
   $$
   G_{d,m,n}(c) = \left\{
   \begin{array}{ll}
   F_{d,m,n}(c) & \text{if } n  \not|  \text{ }m-1 \\
   F_{d,m,n}(c)/F_{d,1,n}(c) & \text{if } n  \text{ }|  \text{ }m-1.
   \end{array}
   \right.
   $$
    \end{definition}
    See \cite{Hutz} for a proof that $G_{d,m,n}(c)$ is in fact a polynomial with integer coefficients.\\
    
    We also need to introduce the polynomials $H_{d,m,n}(c)\in\mathbb{Z}[c]$, which are the unique polynomials that satisfy $H_{d,0,1}(c)=1$, and $H_{d,m,n}(c^{d-1})=G_{d,m,n}(c)$ for $(m,n)\neq (0,1)$. The polynomials $H_{d,m,n}(c)$ arise when one works with the polynomials $g_{c,d}(x)=cx^d+1$ instead of $f_{c,d}(x)$ (see \cite{Buff} and \cite{BEK}). In other words, they can be defined by simply replacing $f_{c,d}$ with $g_{c,d}$ in Definition $1.3$. \\
    
    Question $1.1$ is equivalent to ask whether the polynomial $H_{d,m,n}(c)$ is irreducible over $\mathbb{Q}$ or not, and Question $1.2$ is equivalent to ask whether the polynomial $G_{d,m,n}(c)$ is irreducible over $\mathbb{Q}$ or not. From now on, whenever we say irreducible, we will mean irreducibility over $\mathbb{Q}$ (unless we state otherwise).\\
    
    We note that because of the relation given above, the irreducibility questions for the polynomials $G_{d,m,n}(c)$ and $H_{d,m,n}(c)$ are not equivalent when $d>2$, namely the irreducibility of $G_{d,m,n}(c)$ is a stronger condition than the irreducibility of $H_{d,m,n}(c)$.\\  
    
     We now summarize the known partial results regarding Question $1.1$ and Question $1.2$. Buff \cite{Buff} has shown that $H_{d,0,3}(c)$ is irreducible if and only if $d\not \equiv 1(\text{mod } 6)$. The author \cite{G} has proven that for any $m\geq 2$, $G_{d,m,1}(c)$ is irreducible when $d$ is a prime, and also that $G_{2,m,2}(c)$ is irreducible. Buff, Epstein and Koch \cite{BEK} have proven that for any $m\geq 2$, $H_{d,m,1}(c)$ and $H_{d,m,2}(c)$ have exactly $k$ irreducible factors when $d$ is a prime power, where $k$ is such that $d=p^k$ for some rational prime $p$. They have also proven that for any $m\geq 2$, $G_{2,m,3}(c)$ is irreducible, and $H_{8,m,3}(c)$ has exactly $3$ irreducible factors. These irreducibility results they have proven were corollaries of one of their main theorems (Theorem $19$, \cite{BEK}), which makes a somewhat surprising connection between the polynomial $H_{d,m,n}(c)\in \mathbb{Z}[c]$ and the reduced polynomial $\overline{H_{d,0,n}}(c)\in \mathbb{F}_p[c]$ when $d$ is a power of $p$. More precisely, it states that if the reduced polynomial $\overline{H_{d,m,n}}(c)\in \mathbb{F}_p[c]$ is irreducible over $\mathbb{F}_p$, then the polynomial $H_{d,m,n}(c)$ has exactly $k$ irreducible factors over $\mathbb{Q}$, where $d=p^k$ for some prime $p$. They also remark that the reduced polynomial $\overline{H_{d,0,n}}(c)\in \mathbb{F}_p[c]$ is irreducible only in the cases that show up in the above corollaries: $(d,n)=(p^k,1), (p^k,2), (2,3)$ or $(8,3)$.\\
     
      We now state our main result.
    \begin{theorem}
 	Let $d$ be a prime. Then, for any $m\geq 2$, the number of irreducible factors of $G_{d,m,n}(c)$ over $\mathbb{Q}$ is bounded from above by the number of irreducible factors of the reduced polynomial $\overline{G_{d,0,n}}(c)\in \mathbb{F}_d[c]$. In particular, if $\overline{G_{d,0,n}}(c)\in \mathbb{F}_d[c]$ is irreducible over $\mathbb{F}_d$, then $G_{d,m,n}(c)$ is irreducible over $\mathbb{Q}$. 
 \end{theorem}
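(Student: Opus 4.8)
The plan is to prove a sharper statement over $\mathbb{Q}_d$ and then descend: I will show that $G_{d,m,n}(c)$ has \emph{exactly} $s$ irreducible factors over $\mathbb{Q}_d$, where $s$ is the number of irreducible factors of $\overline{G_{d,0,n}}(c)$ over $\mathbb{F}_d$. Since every irreducible factor of $G_{d,m,n}$ over $\mathbb{Q}$ is a product of irreducible factors over $\mathbb{Q}_d$, this forces at most $s$ irreducible factors over $\mathbb{Q}$, and irreducibility when $s=1$. The case $n=1$ (where $\overline{G_{d,0,1}}(c)=c$ is irreducible, so the claim is just that $G_{d,m,1}$ is irreducible) is the main theorem of \cite{G}, so I assume $n\ge 2$ throughout.

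\emph{Reduction modulo $d$.} From $f_{c,d}^{\,j+1}(0)=f_{c,d}^{\,j}(0)^{d}+c$ and $(a-b)^{d}\equiv a^{d}-b^{d}\pmod d$ one gets, iterating, $f_{c,d}^{\,m+k}(0)-f_{c,d}^{\,m}(0)\equiv\big(f_{c,d}^{\,k}(0)\big)^{d^{m}}\pmod d$. Substituting into Definition~1.3 gives $\overline{F_{d,m,n}}=\overline{G_{d,0,n}}^{\,d^{m}-d^{m-1}}$ and $\overline{F_{d,1,n}}=\overline{G_{d,0,n}}^{\,d-1}$ in $\mathbb{F}_d[c]$, hence
\[
\overline{G_{d,m,n}}(c)=\overline{G_{d,0,n}}(c)^{\,e},\qquad e=\begin{cases}d^{m-1}(d-1)&\text{if }n\nmid m-1,\\ (d^{m-1}-1)(d-1)&\text{if }n\mid m-1,\end{cases}
\]
and $\deg G_{d,m,n}=e\deg G_{d,0,n}$ since the polynomials are monic. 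I also record that $\overline{G_{d,0,n}}$ is \emph{separable}: modulo $d$ one has $f_{c,d}^{\,k}(0)\equiv L_k(c):=\sum_{i=0}^{k-1}c^{d^{i}}$, which is $\mathbb{F}_d$-linear with derivative $1$, and Möbius inversion shows $\overline{G_{d,0,n}}=\prod_\beta(c-\beta)$ over the simple roots $\beta$ for which $0$ has exact period $n$ under $x\mapsto x^{d}+\beta$. Write $\overline{G_{d,0,n}}=\prod_{j=1}^{s}\bar h_j$ with the $\bar h_j$ distinct and irreducible; thus $s$ is the number of irreducible factors of $\overline{G_{d,0,n}}$, counted either way.

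\emph{Local analysis.} Hensel's lemma applied to the pairwise coprime factorization $\overline{G_{d,m,n}}=\prod_j\bar h_j^{\,e}$ yields $G_{d,m,n}=\prod_{j=1}^{s}P_j$ over $\mathbb{Z}_d$ with $\overline{P_j}=\bar h_j^{\,e}$ and $\deg P_j=e\deg\bar h_j$; it remains to prove each $P_j$ is irreducible over $\mathbb{Q}_d$. Fix $j$, put $f_j=\deg\bar h_j$, let $K/\mathbb{Q}_d$ be unramified of degree $f_j$ with residue field containing a root $\beta$ of $\bar h_j$, and pick a lift $\tilde\beta\in\mathcal{O}_K$ of $\beta$ not among the finitely many $d$-adic roots of $\prod_{m'\le m}G_{d,m',n}$ (so the ratios below are defined at $\tilde\beta$). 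The heart of the proof is the identity $v_K\big(G_{d,m,n}(\tilde\beta)\big)=1$. To prove it I would study the iterates $u_k:=f_{\tilde\beta,d}^{\,k}(0)\in\mathcal{O}_K$ around the exact period-$n$ cycle of $0$ modulo the maximal ideal (so $\bar u_k=0$ iff $n\mid k$): writing $w_k:=u_{k+n}-u_k$ and expanding $w_{k+1}=(u_k+w_k)^{d}-u_k^{d}$ binomially, one has $w_1=w_0^{d}$ (as $u_0=0$), and since $v_K(d)=1$ the linear binomial term dominates at later steps, giving $v_K(w_{k+1})=v_K(w_k)+1$ for every step with $n\nmid k$ and $k\ge1$; at the steps with $n\mid k$ one uses in addition that $v_K(u_{tn})=v_K(w_0)$ for all $t\ge1$. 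Combining these, and in the case $n\mid m-1$ dividing by $F_{d,1,n}(\tilde\beta)$, the total $d$-adic valuation of $G_{d,m,n}(\tilde\beta)$ comes out to $1$. Granting this: $\beta$ being a \emph{simple} root, $\overline{G_{d,0,n}}(c+\beta)=c\,u(c)$ with $u(0)\ne0$, so $G_{d,m,n}(c+\tilde\beta)$ reduces modulo the maximal ideal of $\mathcal O_K$ to $c^{e}u(c)^{e}$, whence its Newton polygon over $K$ consists of the segment from $(0,1)$ to $(e,0)$ of slope $-1/e$ followed by a horizontal segment. As the slope $-1/e$ is in lowest terms and the corresponding segment has length $e$, it forces a single, totally ramified, irreducible factor of $G_{d,m,n}$ over $K$ of degree $e$; its roots $\alpha$ reduce to $\beta$, so the residue field of $\mathbb{Q}_d(\alpha)$ contains $\beta$ and hence $K\subseteq\mathbb{Q}_d(\alpha)$, forcing $\mathbb{Q}_d(\alpha)=K(\alpha)$ and $[\mathbb{Q}_d(\alpha):\mathbb{Q}_d]=ef_j=\deg P_j$. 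Thus $P_j$ is irreducible over $\mathbb{Q}_d$, so $G_{d,m,n}=\prod_{j=1}^{s}P_j$ is its factorization into irreducibles over $\mathbb{Q}_d$, and the theorem follows.

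\emph{Where the work is.} The main obstacle is the valuation identity $v_K\big(G_{d,m,n}(\tilde\beta)\big)=1$ of the local step; it is precisely the place where $d$ being \emph{prime} (rather than merely a prime power) is used — for $d=d_0^{k}$ the same computation yields valuation $k$, and a Newton polygon of slope $-k/e$ may split into as many as $k$ factors, which is the phenomenon behind Theorem~19 of \cite{BEK}. Within the computation, the case $n\mid m-1$, where the orbit of $0$ returns through a point reducing to $0$ and one must offset $F_{d,m,n}$ against $F_{d,1,n}$, is the part demanding the most care; the remainder is a routine (if somewhat lengthy) induction on the iterates $u_k$, and at $n=1$ the whole argument specializes to the Eisenstein-type computation of \cite{G}.
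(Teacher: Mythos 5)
Your proposal is correct in substance, but it takes a genuinely different route from the paper. The paper argues globally: it combines the congruence $G_{d,m,n}\equiv G_{d,0,n}^{M_{m,n}}\pmod d$ (Lemma $2.3$, quoted from \cite{BEK}) with the ideal-theoretic input from \cite{G} that $(a_n)^{M_{m,n}}=(d)$ in $\mathcal{O}_K$, $K=\mathbb{Q}(c_0)$, proves the ideal factorization $(a_n)=(d,\tilde{f_1}(c_0))\cdots(d,\tilde{f_k}(c_0))$ along the irreducible factors of $\overline{G_{d,0,n}}$, and then compares exponents to conclude that the reduction mod $d$ of every $\mathbb{Q}$-irreducible factor of $G_{d,m,n}$ is a full $M_{m,n}$-th power, whence at most $k$ factors. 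You instead work locally at $d$: Hensel lifting plus a Newton polygon argument, hinging on the valuation identity $v_K\big(G_{d,m,n}(\tilde\beta)\big)=1$, giving the exact factorization over $\mathbb{Q}_d$ ($s$ factors $P_j$, each of degree $e\deg\bar h_j$ with ramification index $e$), and then descending to $\mathbb{Q}$. Your key identity is true and your sketch of it is the right one: the recursion $w_{k+1}=\sum_{j=1}^{d}\binom{d}{j}u_k^{d-j}w_k^{j}$ gives increment exactly $1$ at steps with $n\nmid k$ (the potential tie $d\,v(w_k)=1+v(w_k)$ when $d=2$ is avoided since $v(w_k)\ge d\,v(w_0)\ge 2$ for $k\ge 1$), increment $1+(d-1)v(w_0)$ at steps with $n\mid k$, the $k\mid n$, $k<n$ factors of the M\"obius product are units because the reduced cycle has exact period $n$, and the lift-dependent quantity $v(w_0)=v_K\big(G_{d,0,n}(\tilde\beta)\big)$ --- which can exceed $1$, since $\beta$ does lift to a $d$-adic root of $G_{d,0,n}$; note your exclusion set, as written, does not address lifts merely \emph{close} to that root --- cancels exactly: the step $m-1\to m$ is a non-periodic step when $n\nmid m-1$, and is offset by $v\big(F_{d,1,n}(\tilde\beta)\big)=(d-1)v(w_0)$ when $n\mid m-1$. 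So, written out carefully, your argument is complete, and it proves more than the statement: an exact count and description of the $\mathbb{Q}_d$-factors, which also recovers the paper's conclusion that each global factor reduces to an $M_{m,n}$-th power (each global factor is a product of your $P_j$ with $\overline{P_j}=\bar h_j^{\,e}$), and it makes transparent where primality of $d$ enters (slope $-1/e$ versus $-k/e$ for $d=p^k$, the phenomenon behind Theorem $19$ of \cite{BEK}). What the paper's route buys in exchange is that it stays at the level of ideals in $\mathcal{O}_K$, reusing Theorem $2.1$ and producing intermediate results (Proposition $2.6$, Remark $2.7$) that it needs again for Theorem $1.9$ and Corollary $3.5$.
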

 The following immediate corollary to this theorem recovers all the cases that the polynomial $G_{d,m,n}(c)$ is known to be irreducible. 
 \begin{corollary}
 	Let $d$ be a prime. Then, for any $m\geq 2$, $G_{d,m,1}(c)$, $G_{2,m,2}(c)$ and $G_{2,m,3}(c)$ are irreducible over $\mathbb{Q}$.
 \end{corollary}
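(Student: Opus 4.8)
The plan is to obtain Corollary~1.5 immediately from the second assertion of the main theorem, Theorem~1.4: if $\overline{G_{d,0,n}}(c)\in\mathbb{F}_d[c]$ is irreducible over $\mathbb{F}_d$, then $G_{d,m,n}(c)$ is irreducible over $\mathbb{Q}$ for every $m\geq 2$. So the whole task reduces to checking that $\overline{G_{d,0,n}}(c)$ is irreducible over $\mathbb{F}_d$ in three cases: $n=1$ with $d$ an arbitrary prime; $(d,n)=(2,2)$; and $(d,n)=(2,3)$.

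Each case is a short computation from Definition~1.3. For $n=1$ the only divisor of $n$ is $1$, so $G_{d,0,1}(c)=f_{c,d}(0)=c$, whose reduction modulo $d$ is the linear polynomial $c$, which is irreducible over $\mathbb{F}_d$; hence $G_{d,m,1}(c)$ is irreducible over $\mathbb{Q}$ for every prime $d$ and every $m\geq 2$. For $(d,n)=(2,2)$, the definition gives $G_{2,0,2}(c)=f_{c,2}^{2}(0)/f_{c,2}(0)=(c^{2}+c)/c=c+1$, again linear and so irreducible over $\mathbb{F}_2$, which yields the irreducibility of $G_{2,m,2}(c)$ over $\mathbb{Q}$. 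For $(d,n)=(2,3)$ one computes $G_{2,0,3}(c)=f_{c,2}^{3}(0)/f_{c,2}(0)=(c^{4}+2c^{3}+c^{2}+c)/c=c^{3}+2c^{2}+c+1$, whose reduction modulo $2$ is $c^{3}+c+1$; this cubic has no root in $\mathbb{F}_2$, hence is irreducible over $\mathbb{F}_2$, and therefore $G_{2,m,3}(c)$ is irreducible over $\mathbb{Q}$.

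Granting Theorem~1.4 there is no real obstacle here: the corollary is a finite check, and all the content lies in the theorem. The only points needing a little care are that one must reduce $G_{d,0,n}(c)$ and not $G_{d,m,n}(c)$ modulo $d$, and that, using the fact recalled after Definition~1.3 that $G_{d,0,n}(c)\in\mathbb{Z}[c]$, these reductions are genuine polynomials of the degrees computed above, so that irreducibility over $\mathbb{F}_d$ is the right notion to test. It is also worth noting, as the text preceding the corollary does, that these three families are precisely the ones in which $G_{d,m,n}(c)$ was previously known to be irreducible, so the corollary genuinely recovers the results of \cite{G} and \cite{BEK} rather than being a vacuous specialization of Theorem~1.4.
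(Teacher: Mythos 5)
Your proposal is correct and takes essentially the same route as the paper: compute $\overline{G_{d,0,1}}(c)=c$, $\overline{G_{2,0,2}}(c)=c+1$, and $\overline{G_{2,0,3}}(c)=c^{3}+c+1$ in $\mathbb{F}_d[c]$ (resp.\ $\mathbb{F}_2[c]$), note each is irreducible, and invoke Theorem~1.4. The only difference is that you spell out the computations from Definition~1.3 explicitly, which the paper leaves implicit; the checks themselves are all correct.
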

\begin{proof}
Noting that each of $\overline{G_{d,0,1}}(c)=c \in \mathbb{F}_d[c],\text{ }\overline{G_{2,0,2}}(c)=c+1 \in \mathbb{F}_2[c]$, and $\overline{G_{2,0,3}}(c)=c^3+c+1 \in \mathbb{F}_2[c]$ is irreducible, the corollary follows from Theorem $1.4$.
\end{proof}
 We also obtain the following new irreducibility result.
\begin{corollary}
For any $m\geq 2$, $G_{3,m,2}(c)$ is irreducible over $\mathbb{Q}$.
\end{corollary}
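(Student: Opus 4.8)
The plan is to deduce this directly from Theorem 1.4 applied with the prime $d=3$ and $n=2$. By that theorem, it suffices to check that the reduced polynomial $\overline{G_{3,0,2}}(c)\in\mathbb{F}_3[c]$ is irreducible over $\mathbb{F}_3$, since irreducibility of $\overline{G_{d,0,n}}$ forces irreducibility of $G_{d,m,n}$ over $\mathbb{Q}$ for all $m\geq 2$.

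First I would compute $G_{3,0,2}(c)$ explicitly from Definition 1.3. The divisors of $2$ are $1$ and $2$, with $\mu(2/1)=\mu(2)=-1$ and $\mu(2/2)=\mu(1)=1$, so $G_{3,0,2}(c)=f_{c,3}^2(0)/f_{c,3}(0)$. Since $f_{c,3}(0)=c$ and $f_{c,3}^2(0)=f_{c,3}(c)=c^3+c$, this gives $G_{3,0,2}(c)=(c^3+c)/c=c^2+1$.

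Next I would reduce modulo $3$ and check irreducibility of $\overline{G_{3,0,2}}(c)=c^2+1\in\mathbb{F}_3[c]$. Being a quadratic, it is reducible over $\mathbb{F}_3$ iff it has a root in $\mathbb{F}_3$, i.e. iff $-1$ is a square mod $3$. The squares in $\mathbb{F}_3$ are $\{0,1\}$ and $-1\equiv 2$, which is not among them; hence $c^2+1$ has no root in $\mathbb{F}_3$ and is irreducible over $\mathbb{F}_3$. Applying Theorem 1.4 then yields that $G_{3,m,2}(c)$ is irreducible over $\mathbb{Q}$ for every $m\geq 2$.

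I do not expect any genuine obstacle here once Theorem 1.4 is available: the whole argument is the short computation $G_{3,0,2}(c)=c^2+1$ together with the elementary fact that $2$ is a quadratic non-residue modulo $3$. The only point requiring any care is correctly unwinding the M\"obius product in Definition 1.3, which for $n=2$ is immediate.
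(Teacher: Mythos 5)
Your proposal is correct and follows exactly the paper's own argument: compute $G_{3,0,2}(c)=c^2+1$, observe it is irreducible over $\mathbb{F}_3$ since $-1$ is not a square mod $3$, and apply Theorem 1.4. The only difference is that you spell out the M\"obius-product computation, which the paper leaves implicit.
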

\begin{proof}
We have $G_{3,0,2}(c)=c^2+1$, which is irreducible in $\mathbb{F}_3[c]$, hence the result again follows from Theorem $1.4$.
\end{proof}
\begin{remark}
Theorem $1.4$ does not imply the irreducibility of $G_{d,m,2}(c)$ for any prime $d>3$, because we have $G_{d,0,2}(c)=c^{d-1}+1$, and one can easily show that $c^{d-1}+1$ is always reducible in $\mathbb{F}_d[c]$ when $d>3$.  In fact, one can prove something much better; $c^{d-1}+1$ is reducible modulo every prime when $d>3$.
\end{remark}
Although Theorem $1.4$ does not prove any new irreducibility result when $n>3$, it provides an upper bound for the number of irreducible factors of the polynomial $G_{d,m,n}(c)$, which is independent of $m$. In particular, because of the way its proof proceeds, it reduces Question $1.2$ to perhaps a simpler problem. We illustrate this with the following example.
\begin{example}
	One of the simplest cases that $G_{d,m,n}(c)$ is not known to be irreducible is the case $d=2, n=4$. Since we have $\overline{G_{2,0,4}}(c)=(c^2+c+1)(c^4+c+1)\in \mathbb{F}_2[c]$, the proof of Theorem $1.4$ implies that if $G_{2,m,4}(c)$ is not irreducible for some $m\geq 2$, then there must exist polynomials $f(c),g(c)\in \mathbb{Z}[c]$ such that
	$$G_{2,m,4}(c)=[(c^2+c+1)^{M_{m,4}}+2f(c)][(c^4+c+1)^{M_{m,4}}+2g(c)],$$
	where $M_{m,4}=2^{m-1}$ if $m\not\equiv 1(\text{mod }4)$, and $M_{m,4}=2^{m-1}-1$ otherwise.
	MAGMA computations reveal that this does not happen for the small values of $m$ (thus $G_{2,m,4}(c)$ is irreducible), but whether this is the case for all $m$ or not remains open.
\end{example}
Using Theorem $1.4$ together with a result of Buff-Epstein-Koch \cite{BEK} and Dedekind's criterion \cite{Dedekind}, we also prove the following result about the number fields generated by Misiurewicz points.
\begin{theorem}
Let $d$ be a prime, and $c_0$ a root of $G_{d,m,n}(c)$. Set $K=\mathbb{Q}(c_0)$. Then we have $d\not | \text{ }[\mathcal{O}_K:\mathbb{Z}[c_0]]$.
\end{theorem}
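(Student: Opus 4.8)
The plan is to show that the order $\mathbb{Z}[c_0]$ is maximal at the prime $d$ by running Dedekind's criterion there. Let $f(c)\in\mathbb{Z}[c]$ be the minimal polynomial of $c_0$; since $G_{d,m,n}$ is monic, $f\mid G_{d,m,n}$ in $\mathbb{Z}[c]$ by Gauss's lemma. The first step is a general reduction: it suffices to verify Dedekind's criterion at $d$ for the (possibly reducible) polynomial $G_{d,m,n}$ itself. Indeed, passing Dedekind's test means that $\mathbb{Z}[c]/(G_{d,m,n})$ is maximal at $d$ inside $\mathbb{Q}[c]/(G_{d,m,n})\cong\prod_j K_{f_j}$; an order in a product that is maximal at $d$ must, after localizing at $d$, coincide with $\prod_j\mathcal{O}_{K_{f_j}}$, so in particular each factor $\mathbb{Z}[c]/(f_j)=\mathbb{Z}[c_0^{(j)}]$ is maximal at $d$.

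Two structural inputs feed the Dedekind computation. First, the congruence driving Theorem~1.4: over $\mathbb{F}_d$ the Frobenius gives $f^{\,j}_{c,d}(0)\equiv L_j(c):=\sum_{l=0}^{j-1}c^{d^l}\pmod d$, and substituting into Definition~1.3 yields $\overline{G_{d,m,n}}(c)=\overline{G_{d,0,n}}(c)^{M}$ in $\mathbb{F}_d[c]$, where $M=d^{m-1}(d-1)$ if $n\nmid m-1$ and $M=(d-1)(d^{m-1}-1)$ if $n\mid m-1$ (the shape of Example~1.9 for $d=2$). Second, a result of Buff--Epstein--Koch coming from their analysis of $\overline{H_{d,0,n}}$ (\cite{BEK}): $\overline{G_{d,0,n}}(c)$ is \emph{separable} over $\mathbb{F}_d$. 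This also follows directly, since Möbius inversion gives $L_n=\prod_{k\mid n}\overline{G_{d,0,k}}$ while $L_n'(c)=1$, so $L_n$ and each of its factors is squarefree. Separability lets us take $G_{d,0,n}$ itself as a lift of the radical of $\overline{G_{d,m,n}}$, so (assuming $M\ge2$; the finitely many cases with $M=1$ are immediate as then $\overline{G_{d,m,n}}$ is separable) Dedekind's criterion for $G_{d,m,n}$ at $d$ collapses to the single condition
$$\gcd\!\Big(\overline{G_{d,0,n}}(c),\ \overline{\tfrac1d\big(G_{d,m,n}(c)-G_{d,0,n}(c)^{M}\big)}\Big)=1\quad\text{in }\mathbb{F}_d[c].$$

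The genuine work — and the step I expect to be the main obstacle — is verifying this gcd, i.e. that $\tfrac1d(G_{d,m,n}-G_{d,0,n}^{M})$ is nonzero at every $\alpha\in\overline{\mathbb{F}_d}$ with $\overline{G_{d,0,n}}(\alpha)=0$ (so that $0$ has exact period $n$ under $x\mapsto x^d+\alpha$). I would compute $G_{d,m,n}$ modulo $d^2$ via the exact product formula $F_{d,m,n}=\prod_{k\mid n}\big(A_m^{(k)}\big)^{\mu(n/k)}$, where $A_m^{(k)}=\big(f^{m+k}_{c,d}(0)-f^m_{c,d}(0)\big)\big/\big(f^{m-1+k}_{c,d}(0)-f^{m-1}_{c,d}(0)\big)=\sum_{i=0}^{d-1}f^{m-1+k}_{c,d}(0)^{\,i}f^{m-1}_{c,d}(0)^{\,d-1-i}$. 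Writing $\delta_k:=f^{m-1+k}_{c,d}(0)-f^{m-1}_{c,d}(0)$, the binomial expansion gives $A_m^{(k)}=\delta_k^{\,d-1}+d\beta_k$ with $\beta_k\in\mathbb{Z}[c]$; a logarithmic‑derivative expansion of the product, together with the telescoping identity $\prod_{k\mid n}\delta_k^{\mu(n/k)}=G_{d,0,n}\prod_{j=1}^{m-1}F_{d,j,n}$, shows that upon evaluation at $\alpha$ only the $k=n$ summand of $\sum_k\mu(n/k)\beta_k\delta_k^{-(d-1)}$ survives (for $\overline{\delta_n}(\alpha)=L_n(\alpha)^{d^{m-1}}=0$ while $\overline{\delta_k}(\alpha)\ne0$ for $k\mid n$, $k<n$), leaving — up to explicitly nonzero factors — the quantity $\overline{f^{m-1}_{\alpha,d}(0)}^{\,d-1}$. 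If $n\nmid m-1$, then $G_{d,m,n}=F_{d,m,n}$ and $f^{m-1}_{\alpha,d}(0)\ne0$ (otherwise $0$ would be periodic of period dividing $\gcd(m-1,n)<n$), so the gcd condition holds. If $n\mid m-1$, then $G_{d,m,n}=F_{d,m,n}/F_{d,1,n}$ with $F_{d,1,n}(c)=G_{d,0,n}(c)^{d-1}$; now $\overline{f^{m-1}_{\alpha,d}(0)}$ vanishes at $\alpha$ to order exactly $1$ (separability of $\overline{G_{d,0,n}}$), so the surviving term vanishes to order $d-1$, precisely the order of the zero of $G_{d,0,n}^{d-1}$ that is being divided out, and the resulting ratio of leading coefficients is again nonzero.

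The obstacle is thus concentrated in this modulo‑$d^2$ bookkeeping: isolating the surviving summand, dealing with the slightly different shape of $\beta_k$ when $d=2$ (where $\binom d2=1$) versus $d$ odd (where $\binom d2\equiv0$), and in the case $n\mid m-1$ tracking one further order of vanishing at $\alpha$. Once the gcd condition is established, the reductions above give $d\nmid[\mathcal{O}_K:\mathbb{Z}[c_0]]$ (and, as a byproduct, the full partition structure of $\overline{G_{d,m,n}}$, refining Theorem~1.4).
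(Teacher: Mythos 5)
Your formal reductions are essentially sound: running Dedekind's criterion on the full (reducible) polynomial $G_{d,m,n}$, the congruence $\overline{G_{d,m,n}}=\overline{G_{d,0,n}}^{\,M_{m,n}}$ (the paper's Lemma 2.3), the separability of $\overline{G_{d,0,n}}$ in $\mathbb{F}_d[c]$ (your $L_n'=1$ argument, or Lemma 3 of \cite{Buff}), and the passage from $d$-maximality of $\mathbb{Z}[c]/(G_{d,m,n})$ to $d$-maximality of each $\mathbb{Z}[c_0]$ all work (you should note that this framework needs $G_{d,m,n}$ squarefree over $\mathbb{Q}$, which here does follow from Theorem 1.4 together with separability of $\overline{G_{d,0,n}}$, since a repeated irreducible factor would force $\overline{A}^2\mid\overline{G_{d,0,n}}$ for some nonconstant $\overline{A}$). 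But these reductions only repackage the theorem: given Theorem 1.4, the statement is equivalent to your displayed gcd condition, and that condition is exactly the step you do not prove. Your ``logarithmic-derivative expansion'' requires evaluating $\sum_k\mu(n/k)\beta_k\delta_k^{-(d-1)}$ at a point $\alpha$ where $\overline{\delta_n}(\alpha)=0$, so the assertion that only the $k=n$ summand survives is a claim about pole--zero cancellation against the prefactor $\prod_k\delta_k^{(d-1)\mu(n/k)}$; neither this, nor the identification of the surviving quantity as a unit times $\overline{f^{m-1}_{\alpha,d}(0)}^{\,d-1}$, nor the extra order-of-vanishing count when $n\mid m-1$, nor the $d=2$ anomaly, is carried out — and you concede this is ``the main obstacle.'' As submitted, the proposal therefore has a genuine gap at its central step.

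The missing idea is that this step should not be done by expanding $G_{d,m,n}$ modulo $d^2$ at all, but by feeding in the arithmetic of the critical orbit: the paper uses Theorem 2.1 (from \cite{G}), via Lemma 2.2 ($a_n=G_{d,0,n}(c_0)\cdot\text{unit}$) and the fact $(a_n)^{M_{m,n}}=(d)$, to compute $\mathrm{Res}(G_{d,m,n},G_{d,0,n})=\pm d^{\deg G_{d,0,n}}$ (Lemma 3.3), and then closes the argument factor by factor with the resultant identity (3.13) and the lift-adjustment Lemma 3.4 — never touching $G_{d,m,n}$ mod $d^2$. In fact, once you have Lemma 3.3, your own gcd condition falls out in a few lines and your hard computation is unnecessary: set $T=(G_{d,m,n}-G_{d,0,n}^{\,M_{m,n}})/d\in\mathbb{Z}[c]$ and evaluate at the roots $\gamma$ of the monic polynomial $G_{d,0,n}$; then $\pm d^{\deg G_{d,0,n}}=\mathrm{Res}(G_{d,m,n},G_{d,0,n})=\pm\prod_\gamma d\,T(\gamma)=\pm d^{\deg G_{d,0,n}}\,\mathrm{Res}(T,G_{d,0,n})$, so $\mathrm{Res}(T,G_{d,0,n})=\pm1$, which forces $\gcd(\overline{T},\overline{G_{d,0,n}})=1$ in $\mathbb{F}_d[c]$. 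So your route can be completed — and then it is arguably cleaner than the paper's factor-by-factor argument — but only after importing the ideal-theoretic input $(a_n)^{M_{m,n}}=(d)$ (equivalently the exact power of $d$ in that resultant); without it, or without actually executing the mod-$d^2$ bookkeeping you sketch, the proof is incomplete.
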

Theorem $1.9$ has an arithmetic consequence for the critical orbit of $f_{c,d}$, see Corollary $3.5$ for details.\\

Finally, we introduce some notation that we will be using throughout the article. Let $K$ be a number field, and $\mathcal{O}_K$ its ring of integers. For any $a\in \mathcal{O}_K$, we denote by $(a)$ the ideal of $\mathcal{O}_K$ generated by $a$. We will also denote by $N_{K/\mathbb{Q}}(a)$ the norm of $a$ in the extension $K/\mathbb{Q}$. When the polynomial $f_{c_0,d}$ has type $(m,n)$, we will use the set $\{a_1,\dots,a_{m+n-1}\}$ to denote the critical orbit of $f_{c_0,d}$, where we set $a_i=f_{c_0,d}^i(0)$. Whenever we use $a_i$ for some $i>m+n-1$, we again obtain it by setting $a_i=f_{c_o,d}^{i}(0)$ and using the periodicity of $f_{c_0,d}$.
	\section{Proof of Theorem $1.4$}
	The goal in this section is to prove Theorem $1.4$. We first need to make some preparation. We start by recalling the main theorem of \cite{G}, as it will be crucial throughout the paper.
	\begin{theorem}
		\cite{G} Let $f_{c,d}(x)=x^d+c \in \bar{\mathbb{Q}}[x]$ be a PCF polynomial having exact type $(m,n)$ with $m\neq 0$. Set $K=\mathbb{Q}(c)$, and let $O_{f_{c,d}} = \{a_1, a_2,\dots , a_{m+n-1}\} \subset \mathcal{O}_K$ be the critical orbit of $f_{c,d}$. Then the following holds:\\
		
		\item[(a)] If $n \not|$ $i$, then $a_i$ is a unit.
		\item[(b)] If $d$ is a prime and $ n \text{ }| \text{ }i$, then one has $(a_i)^{M_{m,n}} = (d)$, where
		$$
		M_{m,n} = \left\{
		\begin{array}{ll}
		d^{m-1}(d-1) & \text{if } n  \not|  \text{ }m-1 \\
		(d^{m-1}-1)(d-1) & \text{if } n  \text{ }|  \text{ }m-1.
		\end{array}
		\right.
		$$
	\end{theorem}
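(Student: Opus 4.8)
My plan is to read off the prime factorisation of each ideal $(a_i)$ in $\mathcal{O}_K$ directly, by reducing the recursion $a_{i+1}=a_i^d+c$ (recall $a_1=c$, $a_0=0$) modulo the primes $\mathfrak p$ of $\mathcal{O}_K$ and splitting into two regimes according to the residue characteristic. The organising principle is that the reduced map $x^d+\bar c$ is a separable degree-$d$ cover when $\mathfrak p\nmid d$, but is purely inseparable (in fact a bijection of the residue field, since in characteristic $d$ Frobenius is additive) when $\mathfrak p\mid d$; these opposite behaviours are exactly what produce the dichotomy between (a) and (b).

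First I would treat primes $\mathfrak p\nmid d$. Here $x^d+\bar c$ is separable with unique, totally ramified critical point $\bar 0$, and I would prove a good-reduction lemma: the exact type $(m,n)$ of the critical orbit is preserved modulo $\mathfrak p$, since any extra coincidence among the reduced orbit points would be a collision at an unramified place, which separability forbids (equivalently, that $\operatorname{Res}(G_{d,m,n}(c),\,f_{c,d}^{\,i}(0))$ is, up to sign, a power of $d$). In particular $\bar 0$ stays strictly preperiodic, so $\bar a_i\neq\bar 0$ for every $i\geq 1$; hence no prime away from $d$ divides any $a_i$. Next, for $\mathfrak p\mid d$ the additivity of Frobenius gives $\bar a_i=\sum_{j=0}^{i-1}\bar c^{\,d^{j}}$ and shows $x^d+\bar c$ is a bijection of the residue field; a preperiodic point of a bijection is periodic, and tracing the cycle backwards yields $\bar a_n=\bar 0$, so $\bar 0$ is periodic of period exactly $n$. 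Thus $\mathfrak p\mid a_i$ iff $n\mid i$. Combining the two regimes proves (a): if $n\nmid i$ then $a_i$ is divisible by no prime of $\mathcal{O}_K$, hence is a unit; and it shows that for $n\mid i$ only primes above $d$ occur in $(a_i)$.

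It then remains to compute $v_{\mathfrak p}(a_i)$ for $\mathfrak p\mid d$ and $n\mid i$. I would introduce $b_j:=a_{j+n}-a_j$, noting $b_0=a_n$ and $b_m=a_{m+n}-a_m=0$. From $b_{j+1}=a_{j+n}^d-a_j^d=(a_j+b_j)^d-a_j^d=\sum_{k=1}^{d}\binom{d}{k}a_j^{d-k}b_j^{k}$, together with $v_{\mathfrak p}\!\left(\binom{d}{k}\right)\geq v_{\mathfrak p}(d)$ for $0<k<d$ ($d$ prime), one reads off, while $v_{\mathfrak p}(a_j)=0$, that $v_{\mathfrak p}(b_{j+1})=\min\{v_{\mathfrak p}(d)+v_{\mathfrak p}(b_j),\,d\,v_{\mathfrak p}(b_j)\}$, which equals $d\,v_{\mathfrak p}(b_j)$ as long as $v_{\mathfrak p}(b_j)<v_{\mathfrak p}(d)/(d-1)$. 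Hence $v_{\mathfrak p}(b_j)=d^{j}v_{\mathfrak p}(a_n)$ grows geometrically; since $b_m=0$, the growth must break at $j=m-1$, which forces $v_{\mathfrak p}(b_{m-1})=v_{\mathfrak p}(d)/(d-1)$, so $d^{m-1}v_{\mathfrak p}(a_n)=v_{\mathfrak p}(d)/(d-1)$, i.e. $M_{m,n}\,v_{\mathfrak p}(a_n)=v_{\mathfrak p}(d)$ with $M_{m,n}=d^{m-1}(d-1)$. The root of unity enters exactly in pinning down the breakdown value: from $a_m=a_{m+n}$ one gets $a_{m-1}^d=a_{m+n-1}^d$, so $a_{m-1}/a_{m+n-1}$ is a $d$-th root of unity, not equal to $1$ because the type is exactly $(m,n)$; as $d$ is prime this forces a primitive $\zeta_d$ into $K$, whence $v_{\mathfrak p}(1-\zeta_d)=v_{\mathfrak p}(d)/(d-1)$. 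When $n\mid m-1$ a tail index is already a multiple of $n$, so $v_{\mathfrak p}(a_{m-1})>0$ and the corresponding step is controlled by the other term of the minimum; tracking this single modification lowers the exponent to $d^{m-1}-1$, giving the second branch of $M_{m,n}$. Finally, periodicity in the cycle and $a_{m-1}^d=a_{m+n-1}^d$ in the tail show all $a_i$ with $n\mid i$ have the same valuation at each $\mathfrak p\mid d$, so the value for $a_n$ transfers to every such $a_i$.

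The main obstacle is this last step: converting the heuristic ``$d-1$ from $\zeta_d$, $d^{m-1}$ from the tail'' into exact valuation equalities $M_{m,n}\,v_{\mathfrak p}(a_i)=v_{\mathfrak p}(d)$ holding at every $\mathfrak p\mid d$ with no unaccounted cancellation at the intermediate indices, and then upgrading this family of local identities to the global ideal equality $(a_i)^{M_{m,n}}=(d)$ --- which amounts to checking that each ramification index above $d$ is a multiple of $M_{m,n}$ and that $(a_i)$ is the corresponding $M_{m,n}$-th root of $(d)$. The delicate two-case bookkeeping of the valuations $v_{\mathfrak p}(b_j)$ along the tail (especially verifying exactly where and how the geometric growth breaks in the case $n\mid m-1$) is where the real work lies; the good-reduction lemma of the first regime also requires an honest separate proof.
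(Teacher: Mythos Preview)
The paper does not prove this statement at all: Theorem~2.1 is quoted verbatim from \cite{G} and is used throughout as a black box, so there is no ``paper's own proof'' to compare your proposal against.

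As for the proposal itself, the overall architecture---separate primes by residue characteristic, exploit that $x\mapsto x^d+\bar c$ is a bijection of the residue field when $\mathfrak p\mid d$, and track the valuations of $b_j=a_{j+n}-a_j$ along the tail using the binomial expansion---is the right one and matches how such results are typically proved. Two of the gaps you yourself flag are genuine and nontrivial: the ``good-reduction lemma'' for $\mathfrak p\nmid d$ is not a consequence of separability alone (a separable degree-$d$ map certainly allows orbit collisions; what is really needed is that the resultants $\mathrm{Res}(G_{d,m,n},f^i_{c,d}(0)-f^j_{c,d}(0))$ are powers of $d$, which requires an independent argument), and for $\mathfrak p\mid d$ only ``period divides $n$'' is immediate---ruling out a proper divisor is exactly the content of part~(a) for small indices and cannot be asserted for free.

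There is also a point you underplay. In the recursion step you write ``while $v_{\mathfrak p}(a_j)=0$'', but for every $j$ with $n\mid j$ and $0<j<m-1$ one has $v_{\mathfrak p}(a_j)>0$, so several intermediate steps, not just $j=m-1$, fall outside your stated hypothesis. It turns out these steps are harmless for the geometric-growth regime (when $(d-1)v_{\mathfrak p}(b_j)<v_{\mathfrak p}(d)$ the unique minimum is still the $k=d$ term regardless of $v_{\mathfrak p}(a_j)$), but this needs to be said. More seriously, the assertion that the geometric growth ``must break at $j=m-1$'' is not justified: you only know $b_m=0$, which forces a breakdown \emph{somewhere}, and you know $v_{\mathfrak p}(b_{m-1})=v_{\mathfrak p}(1-\zeta_d)+v_{\mathfrak p}(a_{m+n-1})$ by the root-of-unity relation, but you have not argued that $v_{\mathfrak p}(b_j)<v_{\mathfrak p}(d)/(d-1)$ for all $j<m-1$. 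Pinning this down (in both branches of $M_{m,n}$) is indeed, as you say, where the real work lies, and the sketch as written does not yet do it.
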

	\begin{lemma}
	Let $p$ be a rational prime, and $c_0$ a root of $G_{d,m,n}(c)$, where $m\neq 0$. Set $K=\mathbb{Q}(c_0)$. Then we have $a_n = G_{d,0,n}(c_0)u$ for some unit $u$ in $\mathcal{O}_K$.
	\end{lemma}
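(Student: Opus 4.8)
The plan is to unravel the definitions of $G_{d,0,n}$ and $G_{d,m,n}$ and compare their factorizations into orbit elements, using that each such element is either a unit or generates the ideal $(d)$ (Theorem 2.1). First I would write
$$G_{d,0,n}(c_0) = \prod_{k\mid n} a_k^{\mu(n/k)} = \prod_{k\mid n}\bigl(f_{c_0,d}^k(0)\bigr)^{\mu(n/k)},$$
so that, by Möbius inversion, $\prod_{k\mid n} G_{d,0,k}(c_0) = a_n$. Hence it suffices to show that $G_{d,0,k}(c_0)$ is a unit in $\mathcal{O}_K$ for every proper divisor $k$ of $n$. Since $c_0$ is a root of $G_{d,m,n}$ with $m\geq 2$, the polynomial $f_{c_0,d}$ has exact type $(m,n)$ with $m\neq 0$, and in particular $a_n \neq a_k$ forces the critical orbit to be genuinely pre-periodic of period $n$, not $k$, for any $k\mid n$, $k<n$.

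The key point is then that $G_{d,0,k}(c_0)$ divides something that is a unit. Concretely, $G_{d,0,k}(c_0) = \prod_{j\mid k} a_j^{\mu(k/j)}$; I would argue that the numerator $\prod_{j\mid k,\ \mu(k/j)=1} a_j$ and denominator $\prod_{j\mid k,\ \mu(k/j)=-1} a_j$ are each products of orbit elements $a_j$ with $j\leq k<n$, hence with $n\nmid j$, so by part (a) of Theorem 2.1 every such $a_j$ is a unit. Therefore $G_{d,0,k}(c_0)$, being a ratio of units, is itself a unit in $\mathcal{O}_K$. Multiplying over all proper divisors $k$ of $n$ and setting $u = \prod_{k\mid n,\ k<n} G_{d,0,k}(c_0)^{-1}$ (a unit) gives $a_n = G_{d,0,n}(c_0)\,u^{-1}$, which is the claimed identity after renaming the unit. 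One subtlety to record carefully: the statement of the lemma does not assume $d$ prime, whereas Theorem 2.1(a) is stated for general $d$ — good — but I should double-check that part (a) genuinely requires no primality hypothesis (the ``unit'' conclusion), and if it implicitly does, then restrict accordingly or supply the short direct argument that $a_j$ for $n\nmid j$ divides $a_n - a_0$-type differences which are forced to be units by the exact-type hypothesis.

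The main obstacle I anticipate is the bookkeeping at divisors $k$ for which $k\mid m-1$ (or more precisely the interaction with the definition of $G_{d,m,n}$ when $n\mid m-1$), and making sure the telescoping $\prod_{k\mid n} G_{d,0,k}(c_0) = a_n$ is applied to $c_0$ in the right order — i.e., that no $G_{d,0,k}(c_0)$ accidentally vanishes, which would happen only if $f_{c_0,d}$ had period dividing $k<n$, contradicting exact type $(m,n)$. Once that non-vanishing is secured, the argument is purely formal: Möbius inversion plus Theorem 2.1(a). I would close by remarking that the roles of the prime $p$ in the statement is vestigial here — the conclusion is about $\mathcal{O}_K$-units and does not reference $p$ — so the lemma holds verbatim, and $p$ is presumably named only for uniformity with how it is invoked later.
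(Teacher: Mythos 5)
Your argument is correct, but it takes a genuinely different route from the paper. The paper's proof does not use M\"obius inversion at all: it observes that $G_{d,0,n}(c_0)$ divides $a_n$ in $\mathbb{Z}[c_0]$, invokes the fact that $\{a_i\}$ is a rigid divisibility sequence (citing Hamblen--Jones--Madhu) so that $G_{d,0,n}(c_0)$ is the \emph{primitive part} of $a_n$ (citing Lemma $5.4$ of Looper), deduces that $a_n/G_{d,0,n}(c_0)$ divides $a_1\cdots a_{n-1}$ in $\mathbb{Z}[c_0]$, and then uses Theorem $2.1$(a) to see that this product is a unit. You instead use the telescoping identity $a_n=\prod_{k\mid n}G_{d,0,k}(c_0)$ coming from M\"obius inversion, and note that for each proper divisor $k$ of $n$ the factor $G_{d,0,k}(c_0)=\prod_{j\mid k}a_j^{\mu(k/j)}$ is a quotient of units of $\mathcal{O}_K$ (each $a_j$ with $j\le k<n$ satisfies $n\nmid j$, so Theorem $2.1$(a) applies), hence is itself a unit; taking $u$ to be the product of these factors gives the claim. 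Your route is more self-contained, avoiding the rigid-divisibility/primitive-part machinery and the external references, at the cost of being tailored to this exact statement (the primitive-part viewpoint is reused later in the paper, e.g.\ in Lemma $3.3$ for $G_{d,0,k}(c_i)$ with $n\mid k$, $k\neq n$, where your units-ratio argument would not apply verbatim). Two small points you flagged are indeed fine: the specialization of the M\"obius identity at $c_0$ is safest phrased as a polynomial identity in $\mathbb{Z}[c]$ (or one notes $a_j(c_0)\neq 0$ for all $j$ since $m\geq 2$ means $0$ is strictly preperiodic), Theorem $2.1$(a) carries no primality hypothesis, and the worry about divisors of $m-1$ is moot since only the $G_{d,0,k}$ enter.
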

\begin{proof}
We know from Theorem $2.1$ that $a_i$ is a unit in $\mathcal{O}_K$ for all $1\leq i\leq n-1$. It is also clear by the definition of $G_{d,0,n}(c)$ that $G_{d,0,n}(c_0)$ divides $a_n$ in $\mathbb{Z}[c_0]$. Note that the sequence $\{a_i\}_{i\geq1}$ is a rigid divisibility sequence (see \cite{Hamblen} for a definition of a rigid divisibility sequence and the proof of this fact), from which one sees that $G_{d,0,n}(c_0)$ is the primitive part of $a_n$ (Lemma $5.4$, \cite{Looper}). This implies that $\frac{a_n}{G_{d,0,n}(c_0)}$ divides $a_1\cdots a_{n-1}$ in $\mathbb{Z}[c_0]$. But, the product $a_1\cdots a_{n-1}$ is a unit in $\mathcal{O}_K$, hence $\frac{a_n}{G_{d,0,n}(c_0)}$ must be a unit in $\mathcal{O}_K$, which is what we wanted.
\end{proof}
The following lemma due to Buff-Epstein-Koch will also be crucial in the proof of Theorem $1.4$.
\begin{lemma}
\cite{BEK} Let $d$ be a rational prime, and define $M_{m,n}$ as in Theorem $2.1$. Then we have $G_{d,m,n}\equiv G_{d,0,n}^{M_{m,n}}(\text{mod }d)$ for all $n\geq 1$.
\end{lemma}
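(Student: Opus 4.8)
My plan is to reduce the entire statement to one computation in $\mathbb{F}_d[c]$, exploiting that in characteristic $d$ the Frobenius is additive. First I would note that, by the freshman's dream and an easy induction on $k$,
\[
f_{c,d}^{\,k}(0)\ \equiv\ c+c^{d}+c^{d^{2}}+\cdots+c^{d^{k-1}}\ =\ \sum_{i=0}^{k-1}c^{d^{i}}\pmod d\qquad(k\ge 1).
\]
Because the $d^{j}$-th power map on $\mathbb{F}_d[c]$ is also additive, subtracting the versions of this identity for $j+k$ and for $j$ gives the key congruence
\begin{equation}\tag{$\star$}
f_{c,d}^{\,j+k}(0)-f_{c,d}^{\,j}(0)\ \equiv\ \sum_{i=j}^{j+k-1}c^{d^{i}}\ =\ \Bigl(\sum_{i=0}^{k-1}c^{d^{i}}\Bigr)^{d^{j}}\ \equiv\ \bigl(f_{c,d}^{\,k}(0)\bigr)^{d^{j}}\pmod d ,
\end{equation}
valid for all $j\ge 0$ and $k\ge 1$. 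I would also record that each $\overline{f_{c,d}^{\,k}(0)}$ is a \emph{nonzero} element of the integral domain $\mathbb{F}_d[c]$ (it has leading term $c^{d^{k-1}}$), which will permit free cancellation below.

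Next I would feed $(\star)$ into Definition 1.3. Put $N_k=f_{c,d}^{\,m+k}(0)-f_{c,d}^{\,m}(0)$ and $D_k=f_{c,d}^{\,m-1+k}(0)-f_{c,d}^{\,m-1}(0)$, so that $(\star)$ yields $\overline{N_k}=\overline{f_{c,d}^{\,k}(0)}^{\,d^{m}}$ and $\overline{D_k}=\overline{f_{c,d}^{\,k}(0)}^{\,d^{m-1}}$. Since $F_{d,m,n}$ is an honest polynomial over $\mathbb{Z}$, it satisfies in $\mathbb{Z}[c]$ the identity
\[
F_{d,m,n}\cdot\prod_{\mu(n/k)=-1}N_k\cdot\prod_{\mu(n/k)=1}D_k\ =\ \prod_{\mu(n/k)=1}N_k\cdot\prod_{\mu(n/k)=-1}D_k .
\]
Reducing modulo $d$, substituting the expressions for $\overline{N_k}$ and $\overline{D_k}$, and cancelling the common factor $\prod_{k\mid n}\overline{f_{c,d}^{\,k}(0)}^{\,d^{m-1}}$ (legitimate in the domain $\mathbb{F}_d[c]$), one obtains
\[
\overline{F_{d,m,n}}\ =\ \Bigl(\,\prod_{k\mid n}\overline{f_{c,d}^{\,k}(0)}^{\,\mu(n/k)}\Bigr)^{d^{m-1}(d-1)}\ =\ \overline{G_{d,0,n}}^{\,d^{m-1}(d-1)},
\]
the last equality coming from the analogous identity $G_{d,0,n}\cdot\prod_{\mu(n/k)=-1}f_{c,d}^{\,k}(0)=\prod_{\mu(n/k)=1}f_{c,d}^{\,k}(0)$ in $\mathbb{Z}[c]$, reduced mod $d$ (which, incidentally, shows $\overline{G_{d,0,n}}\neq 0$). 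The same computation with $m=1$ — where $f_{c,d}^{\,0}(0)=0$, so the $k$-th denominator is simply $f_{c,d}^{\,k}(0)$ — gives $\overline{F_{d,1,n}}=\overline{G_{d,0,n}}^{\,d-1}$.

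Finally I would split into the two cases of Definition 1.3. If $n\nmid m-1$, then $G_{d,m,n}=F_{d,m,n}$ and $M_{m,n}=d^{m-1}(d-1)$, so the displayed formula for $\overline{F_{d,m,n}}$ is exactly the claim. If $n\mid m-1$, then $G_{d,m,n}=F_{d,m,n}/F_{d,1,n}$; since $\overline{F_{d,1,n}}=\overline{G_{d,0,n}}^{\,d-1}\neq 0$ in $\mathbb{F}_d[c]$, dividing the reduction of the identity $G_{d,m,n}\cdot F_{d,1,n}=F_{d,m,n}$ by it gives
\[
\overline{G_{d,m,n}}\ =\ \overline{G_{d,0,n}}^{\,d^{m-1}(d-1)-(d-1)}\ =\ \overline{G_{d,0,n}}^{\,(d^{m-1}-1)(d-1)}\ =\ \overline{G_{d,0,n}}^{\,M_{m,n}},
\]
as desired. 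The only point needing care — and my reason for routing everything through polynomial identities over $\mathbb{Z}[c]$ rather than through the formal products $\prod(N_k/D_k)^{\mu(n/k)}$ — is that reduction mod $d$ must be checked to commute with the genuine divisions defining $F_{d,m,n}$ and $G_{d,m,n}$; this is precisely what the nonvanishing of $\overline{f_{c,d}^{\,k}(0)}$ and $\overline{G_{d,0,n}}$ in the integral domain $\mathbb{F}_d[c]$ handles.
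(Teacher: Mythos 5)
Your argument is correct, and it is worth noting that the paper itself offers no proof of this lemma at all --- it is quoted from \cite{BEK} --- so you have supplied a self-contained elementary proof where the paper simply cites one. Your key steps all check out: the induction giving $f_{c,d}^{k}(0)\equiv\sum_{i=0}^{k-1}c^{d^{i}}\pmod d$, the Frobenius identity $(\star)$ yielding $\overline{N_k}=\overline{f_{c,d}^{k}(0)}^{\,d^{m}}$ and $\overline{D_k}=\overline{f_{c,d}^{k}(0)}^{\,d^{m-1}}$, and the care you take in routing the M\"obius products through cleared-denominator identities in $\mathbb{Z}[c]$ before reducing, so that all cancellations happen among nonzero elements of the integral domain $\mathbb{F}_d[c]$; the exponent bookkeeping $d^{m-1}(d-1)-(d-1)=(d^{m-1}-1)(d-1)$ in the case $n\mid m-1$ matches the definition of $M_{m,n}$ in Theorem $2.1$. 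Two small remarks. First, your appeal to ``$F_{d,m,n}$ is an honest polynomial over $\mathbb{Z}$'' should be justified for all the $F$'s you reduce: for $n\nmid m-1$ one has $F_{d,m,n}=G_{d,m,n}$, and in the remaining case $F_{d,m,n}=G_{d,m,n}F_{d,1,n}$, so everything reduces to the Hutz--Towsley fact the paper already cites once one knows $F_{d,1,n}\in\mathbb{Z}[c]$. Second, that last fact is immediate and would shorten your proof: since $f_{c,d}^{1+k}(0)-f_{c,d}(0)=\bigl(f_{c,d}^{k}(0)\bigr)^{d}$, each factor in $F_{d,1,n}$ equals $\bigl(f_{c,d}^{k}(0)\bigr)^{d-1}$, so in fact $F_{d,1,n}=G_{d,0,n}^{\,d-1}$ exactly in $\mathbb{Z}[c]$, not merely modulo $d$, which disposes of the case $n\mid m-1$ with no further reduction argument.
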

	\begin{lemma}
	Let $K$ be a number field, $p$ a rational prime, and $\alpha\in \mathcal{O}_K$. Then the ideal $(p,\alpha)$ is the unit ideal if and only if $N_{K/\mathbb{Q}}(\alpha)$ is relatively prime to $p$.
	\end{lemma}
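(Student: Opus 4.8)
The plan is to translate the statement into one about the finite quotient ring $R := \mathcal{O}_K/(\alpha)$. First I would dispose of the degenerate case $\alpha = 0$: here $N_{K/\mathbb{Q}}(\alpha) = 0$ is not relatively prime to $p$, while $(p,\alpha) = (p) \neq \mathcal{O}_K$ since $p$ is not a unit in $\mathcal{O}_K$, so the asserted equivalence holds trivially. So assume $\alpha \neq 0$, in which case $R$ is finite of cardinality $|R| = |N_{K/\mathbb{Q}}(\alpha)|$, by the standard formula relating the absolute norm of the principal ideal $(\alpha)$ to the field norm of a generator.

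Next, I would observe that $(p,\alpha) = \mathcal{O}_K$ if and only if $1 \in (p,\alpha)$, i.e. if and only if the image $\bar p$ of $p$ in $R$ satisfies $\bar p\,R = R$; this is just unwinding what it means for $p$ and $\alpha$ to generate the unit ideal. Now the key point: since $R$ is a finite abelian group, multiplication by $p$ is a group endomorphism of $R$, hence it is surjective if and only if it is injective, and this occurs if and only if $p \nmid |R|$. Therefore $\bar p\,R = R$ is equivalent to $p \nmid |R| = |N_{K/\mathbb{Q}}(\alpha)|$, which is exactly the statement that $N_{K/\mathbb{Q}}(\alpha)$ is relatively prime to $p$. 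Chaining these equivalences together proves the lemma.

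As a cross-check (and an alternative route) one may instead factor $(\alpha) = \prod_i \mathfrak{q}_i^{e_i}$ into prime ideals and note that $(p,\alpha)$ is a proper ideal precisely when some $\mathfrak{q}_i$ lies above $p$, which in turn happens precisely when $p$ divides $N\!\left(\prod_i \mathfrak{q}_i^{e_i}\right) = |N_{K/\mathbb{Q}}(\alpha)|$. Either way, the only inputs are the norm identity $|\mathcal{O}_K/(\alpha)| = |N_{K/\mathbb{Q}}(\alpha)|$ and the elementary dichotomy "surjective $\Leftrightarrow$ injective" for endomorphisms of a finite group, so I do not anticipate any genuine obstacle; the mild care needed is just in handling $\alpha = 0$ and in invoking the norm formula correctly.
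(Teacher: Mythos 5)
Your proof is correct, but it follows a genuinely different route from the paper's. You reduce the statement to the finite quotient ring $R=\mathcal{O}_K/(\alpha)$, using the index formula $|\mathcal{O}_K/(\alpha)|=|N_{K/\mathbb{Q}}(\alpha)|$ together with the elementary fact that multiplication by $p$ on a finite abelian group is surjective if and only if it is injective, if and only if $p\nmid |R|$; you also dispose of the degenerate case $\alpha=0$, which the paper does not mention (its argument is vacuously fine there). The paper instead works directly with B\'ezout identities and norms: if $N=N_{K/\mathbb{Q}}(\alpha)$ is prime to $p$ it writes $aN+bp=1$ in $\mathbb{Z}$ and uses that $N$ lies in $(p,\alpha)$ (since $\alpha$ divides $N$ in $\mathcal{O}_K$); conversely, from $ap+b\alpha=1$ in $\mathcal{O}_K$ it takes the product over the embeddings of $K$ and expands to obtain $pA+NB=1$ with $A,B$ algebraic integers, which forces $N$ to be prime to $p$. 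Your version buys conceptual cleanliness and a uniform treatment (including the alternative check via the prime factorization of $(\alpha)$, which is essentially the same argument), at the cost of invoking the standard but nontrivial norm-counting formula; the paper's version is more elementary and dovetails with the identification $N_{K/\mathbb{Q}}(h(\alpha))=\mathrm{Res}(f,h)$ that it exploits immediately afterwards in Lemma 2.5.
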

    \begin{proof}
    Set $N=N_{K/\mathbb{Q}}(\alpha)$. If $N$ is relatively prime to $p$, then there exist $a,b\in \mathbb{Z}$ such that $aN+bp=1$, which will lie in the ideal, since clearly $N$ lies in the ideal. For the other direction, suppose that $(p,\alpha)$ is the unit ideal. Choose $a,b\in \mathcal{O}_K$ so that $ap+b\alpha=1$. Let $\sigma_1,\dots,\sigma_n$ be the embeddings of $K$. Recall that $N=\prod_{i=1}^{n}\sigma_i(\alpha)$. Then we have 
    $$N_{K/\mathbb{Q}}(ap+bk) = \prod_{i=1}^{n} (p\sigma_i(a)+\sigma_i(b)\sigma_i(\alpha))=1,$$
    which, after expanding, becomes
    $$pA+NB =1$$
    for some algebraic integers $A,B$, which clearly shows that $N$ has to be relatively prime to $p$, as desired.
    \end{proof}
\begin{lemma}
Let $K$ be a number field, and $p$ a rational prime. Choose $\alpha \in \mathcal{O}_K$ so that $K=\mathbb{Q}(\alpha)$. Let $f(x)\in\mathbb{Z}[x]$ be the minimal polynomial of $\alpha$. Suppose $\overline{f}(x)\in \mathbb{F}_p[x]$ factors as
$$\overline{f}(x)=g_1(x)^{e_1}\cdots g_k(x)^{e_k},$$
where $g_1(x),\dots,g_k(x)\in \mathbb{F}_p[x]$ are distinct and irreducible. Then, for any monic polynomial $h(x)\in \mathbb{Z}[x]$, $(p, h(\alpha))$ is not the unit ideal in $\mathcal{O}_K$ if and only if $g_i(x)|\overline{h}(x)$ in $\mathbb{F}_p[x]$ for some $1\leq i\leq k$.  
\end{lemma}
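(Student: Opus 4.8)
The plan is to reduce the whole statement to Lemma~2.5 together with a standard resultant computation. First I would apply Lemma~2.5 with $\beta=h(\alpha)\in\mathcal{O}_K$: it says that $(p,h(\alpha))$ is the unit ideal if and only if $N_{K/\mathbb{Q}}(h(\alpha))$ is prime to $p$. Hence $(p,h(\alpha))$ fails to be the unit ideal precisely when $p\mid N_{K/\mathbb{Q}}(h(\alpha))$, and the lemma becomes a purely elementary statement about this rational integer.

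Next I would compute that norm. Let $\sigma_1,\dots,\sigma_n$ (with $n=\deg f$) be the embeddings of $K$; since $f$ is irreducible over $\mathbb{Q}$, the values $\sigma_i(\alpha)$ are exactly the roots of $f$, each occurring once, so
$$N_{K/\mathbb{Q}}(h(\alpha))=\prod_{i=1}^{n}\sigma_i(h(\alpha))=\prod_{i=1}^{n}h(\sigma_i(\alpha))=\prod_{f(\beta)=0}h(\beta).$$
Because $f$ is monic this last product is precisely the resultant $\mathrm{Res}(f,h)\in\mathbb{Z}$. Since both $f$ and $h$ are monic, reduction modulo $p$ lowers neither degree, so the Sylvester-determinant expression for the resultant specializes without degeneration: $\mathrm{Res}(f,h)\equiv\mathrm{Res}(\overline f,\overline h)\pmod p$. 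Therefore $p\mid N_{K/\mathbb{Q}}(h(\alpha))$ if and only if $\mathrm{Res}(\overline f,\overline h)=0$ in $\mathbb{F}_p$.

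Finally I would invoke the standard properties of the resultant over the field $\mathbb{F}_p$. Using multiplicativity in the first argument and the factorization $\overline f=g_1^{e_1}\cdots g_k^{e_k}$ one has $\mathrm{Res}(\overline f,\overline h)=\prod_{i=1}^{k}\mathrm{Res}(g_i,\overline h)^{e_i}$, and for each $i$ the factor $\mathrm{Res}(g_i,\overline h)$ vanishes if and only if $g_i$ and $\overline h$ have a common root in $\overline{\mathbb{F}_p}$, i.e.\ (as $g_i$ is irreducible) if and only if $g_i\mid\overline h$ in $\mathbb{F}_p[x]$. Chaining the equivalences---$(p,h(\alpha))$ not the unit ideal $\iff p\mid N_{K/\mathbb{Q}}(h(\alpha))\iff\mathrm{Res}(\overline f,\overline h)=0\iff g_i\mid\overline h$ for some $i$---yields exactly the assertion of the lemma. (The degenerate case $\deg h=0$, forcing $h=1$ and $h(\alpha)=1$, makes both sides hold trivially and vacuously, respectively.)

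I do not expect a serious obstacle here: the lemma is essentially a repackaging of Lemma~2.5 in terms of the factorization of $\overline f$. The only point that needs genuine (though minor) care is the specialization step---verifying that reduction modulo $p$ commutes with forming the norm/resultant---and this is exactly where the monicity hypotheses on both $f$ and $h$ are used, since they guarantee that no leading coefficient vanishes in $\mathbb{F}_p$.
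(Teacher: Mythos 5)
Your proposal is correct and follows essentially the same route as the paper: reduce to the norm criterion (the paper's Lemma~2.4 --- you cite it as ``Lemma~2.5,'' a harmless numbering slip), identify $N_{K/\mathbb{Q}}(h(\alpha))$ with $\mathrm{Res}(f,h)$, and observe that divisibility by $p$ is equivalent to $\overline{f}$ and $\overline{h}$ sharing a factor in $\mathbb{F}_p[x]$. Your write-up is in fact slightly more careful than the paper's, since you spell out why monicity lets the resultant specialize modulo $p$ and why a vanishing $\mathrm{Res}(g_i,\overline{h})$ forces $g_i\mid\overline{h}$.
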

\begin{proof}
First suppose that $(p,h(\alpha))$ is not the unit ideal in $\mathcal{O}_K$. Then by Lemma $2.4$, $N_{K/\mathbb{Q}}(h(\alpha)) \equiv 0 (\text{mod }p)$. Recall that $N_{K/\mathbb{Q}}(h(\alpha)) = \text{Res}(f,h)$. Hence, we get $\text{Res}(f,h)\equiv 0 (\text{mod }p)$, which forces $\overline{f}$ and $\overline{h}$ to have a common factor in $\mathbb{F}_p[x]$, which proves this part of the statement. For the other direction, assume $g_i(x)|\overline{h}(x)$ for some $1\leq i\leq k$. Since this means that $\overline{f}$ and $\overline{h}$ have a common factor in $\mathbb{F}_p[x]$, this again implies that $N_{K/\mathbb{Q}}(h(\alpha))=\text{Res}(f,h)\equiv 0(\text{mod }p)$, which, by Lemma $2.4$, shows that $(p,h(\alpha))$ is not the unit ideal in $\mathcal{O}_K$, as desired.
\end{proof}
The next proposition combined with the remark following it will provide us an explicit factorization of the ideal $(d)$ in the number field generated by a root of the Misiurewicz polynomial $G_{d,m,n}(c)$, which will be heavily used in the proof of Theorem $1.4$.
\begin{proposition}
Let $d$ be a prime. Suppose $\overline{G_{d,0,n}}(c)$ factors as
$$\overline{G_{d,0,n}}(c) = f_1(c)\cdots f_k(c),$$
where $f_1(c),\dots,f_k(c)\in \mathbb{F}_p[c]$ are distinct irreducible polynomials. Then, if $\tilde{f_1}(c),\dots,\tilde{f_k}(c)\in \mathbb{Z}[c]$ are any lifts of these polynomials, and $c_0$ is a root of $G_{d,m,n}$, we have
\begin{equation}
(a_n) = (d,\tilde{f_1}(c_0))\cdots (d,\tilde{f_k}(c_0)).
\end{equation}
\end{proposition}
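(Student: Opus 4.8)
The plan is to rewrite both sides of the asserted identity in terms of $(d)$ together with the single element $\prod_{i=1}^{k}\tilde f_i(c_0)$, and then to reduce everything to a short ideal-theoretic computation with pairwise comaximal ideals. Throughout I would note first that a root $c_0$ of $G_{d,m,n}$ gives a polynomial $f_{c_0,d}$ of exact type $(m,n)$, so that Lemma $2.2$ and Theorem $2.1$ apply with $K=\mathbb Q(c_0)$.

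The first move is to eliminate $a_n$. By Lemma $2.2$, $a_n=u\,G_{d,0,n}(c_0)$ for some $u\in\mathcal O_K^{\times}$, so $(a_n)=(G_{d,0,n}(c_0))$; and applying Theorem $2.1$(b) with $i=n$ gives $(a_n)^{M_{m,n}}=(d)$, hence $(d)\subseteq(a_n)$. Together these give $(a_n)=\bigl(d,\,G_{d,0,n}(c_0)\bigr)$. The second move uses the factorization hypothesis: since $\overline{\tilde f_i}=f_i$ in $\mathbb F_d[c]$ for each $i$ and $\overline{G_{d,0,n}}=f_1\cdots f_k$, the integer polynomial $G_{d,0,n}(c)-\prod_{i=1}^{k}\tilde f_i(c)$ lies in $d\,\mathbb Z[c]$, and evaluating at $c_0$ yields $G_{d,0,n}(c_0)\equiv\prod_{i}\tilde f_i(c_0)\pmod{d\,\mathcal O_K}$. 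Hence $(a_n)=\bigl(d,\,\prod_{i=1}^{k}\tilde f_i(c_0)\bigr)$, and (incidentally) the ideal $\mathfrak b_i:=(d,\tilde f_i(c_0))$ is independent of the chosen lift. So it remains to prove the purely algebraic identity $\prod_{i=1}^{k}\mathfrak b_i=\bigl(d,\,\prod_{i=1}^{k}\tilde f_i(c_0)\bigr)$.

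For this, the key input is that the $\mathfrak b_i$ are pairwise comaximal: for $i\ne j$, the polynomials $f_i,f_j$ are distinct irreducibles in the principal ideal domain $\mathbb F_d[c]$, hence coprime, so lifting a B\'ezout identity $af_i+bf_j=1$ to $\mathbb Z[c]$ and evaluating at $c_0$ puts $1$ into $\mathfrak b_i+\mathfrak b_j$. Comaximality yields $\prod_{i}\mathfrak b_i=\bigcap_{i}\mathfrak b_i$; since $d\in\mathfrak b_i$ for all $i$, we get $d\in\prod_i\mathfrak b_i$, and as $\prod_i\tilde f_i(c_0)\in\prod_i\mathfrak b_i$ trivially, this gives $\bigl(d,\,\prod_i\tilde f_i(c_0)\bigr)\subseteq\prod_i\mathfrak b_i$. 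The opposite inclusion is immediate, because $\prod_i\mathfrak b_i$ is generated by the products $x_1\cdots x_k$ with $x_i\in\{d,\tilde f_i(c_0)\}$, and every such product is either divisible by $d$ (if some $x_j=d$) or equal to $\prod_i\tilde f_i(c_0)$. Combining the two inclusions with the reductions above finishes the proof.

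I do not anticipate a real obstacle in carrying this out; the one step deserving care is the identity $\prod_i\mathfrak b_i=\bigcap_i\mathfrak b_i$, which is precisely where the hypothesis that the $f_i$ are \emph{distinct irreducible} factors of $\overline{G_{d,0,n}}$ is used, and which is what lets $d$ itself (rather than merely a power of $d$) lie in the product ideal.
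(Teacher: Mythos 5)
Your proposal is correct, but it reaches the factorization by a genuinely different route than the paper. You first collapse the left-hand side to a single two-generator ideal: $(a_n)=(G_{d,0,n}(c_0))$ by Lemma 2.2, $(d)\subseteq (a_n)$ by Theorem 2.1(b), hence $(a_n)=(d,\,G_{d,0,n}(c_0))=(d,\,\prod_i\tilde f_i(c_0))$; you then prove the purely ring-theoretic identity $(d,\,\prod_i\tilde f_i(c_0))=\prod_i(d,\tilde f_i(c_0))$, where the crucial containment of $d$ itself in the product comes from pairwise comaximality of the ideals $(d,\tilde f_i(c_0))$ (a B\'ezout identity in $\mathbb F_d[c]$ for the distinct irreducibles $f_i,f_j$, lifted to $\mathbb Z[c]$ and evaluated at $c_0$), so that the product equals the intersection; the reverse inclusion is immediate from the generator-by-generator description of the product. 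The paper instead proves the two inclusions of (2.1) directly: the inclusion $\supseteq$ is the same easy step, but to put $d$ (and hence $a_n$) into the product ideal it invokes the relation $(a_n)^{M_{m,n}}=(d)$ from Theorem 2.1 and runs an iterative grouping argument split into the cases $k\leq M_{m,n}$ and $k>M_{m,n}$; distinctness of the $f_i$ is never used there. Your argument buys a cleaner, case-free proof that isolates exactly where distinctness of the reduced factors enters and needs only $(d)\subseteq(a_n)$ rather than the full strength of the power relation; the paper's argument buys independence from the comaximality input, extracting everything from the dynamical ideal relation of Theorem 2.1. Both proofs, like the paper, tacitly use that a root of $G_{d,m,n}$ gives a parameter of exact type $(m,n)$ so that Theorem 2.1 and Lemma 2.2 apply, and that $c_0\in\mathcal O_K$ so evaluation at $c_0$ lands in $\mathcal O_K$.
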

\begin{proof}
First note that from Lemma $2.2$, we have $a_n=G_{d,0,n}(c_0)u$ for some unit $u\in \mathbb{Z}[c_0]$. This gives that
\begin{equation}
a_n=\tilde{f_1}(c_0)\cdots \tilde{f_k}(c_0)u+d\alpha(c_0)
\end{equation}  for some $\alpha(c)\in \mathbb{Z}[c]$. We will now prove the proposition by showing that each side of (2.1) is contained in the other side:\\

$\supseteq:$ All the generators of the product ideal involving $d$ already belong to $(a_n)$, because from Theorem $2.1$ we have $d\in (a_n)$. So, it suffices to show that $\tilde{f_1}(c_0)\cdots \tilde{f_k}(c_0)\in (a_n)$. We have $d\in (a_n)$, which gives $a_n-d\alpha(c_0) = \tilde{f_1}(c_0)\cdots \tilde{f_k}(c_0)u \in (a_n)$, which gives what we want, since $u$ is a unit.\\

$\subseteq:$ If $k\leq M_{m,n}$, then since $a_n\in (d,\tilde{f_i}(c_0))$ for all $i$ (by (2.2)), we get that $d$ lies in the right-hand side of (2.1), because from Theorem $2.1$ we have $d\in (a_n)^{k}$, and $a_n^{k}$ lies in the right-hand side. But then, if $d$ lies in the right-hand side of (2.1), we get that $a_n=\tilde{f_1}(c_0)\cdots \tilde{f_k}(c_0)u+d\alpha(c_0)$ lies in the right-hand side of (2.1) as well, as desired.  So, we can assume without loss of generality that $k>M_{m,n}$. By the reasoning above, to finish the proof, it suffices to prove that $d$ lies in the right-hand side of (2.1). Write $k=M_{m,n}l+q$, $0\leq q < M_{m,n}$. Note that similar to above, we will have $$a_n\in (d,\tilde{f}_{iM_{m,n}+1}(c_0))\cdots (d,\tilde{f}_{(i+1)M_{m,n}}(c_0))$$
for $i=0,\dots,l-1$, and
$$a_n\in (d,\tilde{f}_{lM_{m,n}+1}(c_0))\cdots (d,\tilde{f}_{lM_{m,n}+q}(c_0)).$$
This implies that $a_n^{l+1}$ lies in the right-hand side of (2.1), which, if $l+1\leq M_{m,n}$, will again imply that $d$ lies in the right-hand side of (2.1), which will finish the proof. If $l+1>M_{m,n}$, we can repeat the same argument again, and it is obvious that this procedure will eventually terminate, and we will get that $d$ lies in the right-hand side of (2.1), so we are done.
\end{proof}
\begin{remark}
Note that since we have $(a_n)^{M_{m,n}} = (d)$ from Theorem $2.1$, Proposition $2.3$ gives a factorization of the ideal $(d)$ in $\mathcal{O}_K$. More precisely, we get
\begin{equation}
(d) = (d,\tilde{f_1}(c_0))^{M_{m,n}}\cdots (d,\tilde{f_k}(c_0))^{M_{m,n}}.
\end{equation}
\end{remark}
We are finally ready to prove Theorem $1.4$.
\begin{proof}[Proof of Theorem $1.4$]
Recall from Lemma $2.3$ that if $\overline{G_{d,0,n}}(c)\in \mathbb{F}_d[c]$ factors as
$$\overline{G_{d,0,n}}(c) = f_1(c)\cdots f_k(c),$$
then we have
$$\overline{G_{d,m,n}}(c) = [f_1(c)\cdots f_k(c)]^{M_{m,n}}.$$
Let $H(c)\in \mathbb{Z}[c]$ be any irreducible factor of $G_{d,m,n}(c)$, and take $c_0$ to be a root of $H(c)$. If we can show that $\overline{H}(c) = [A(c)]^{M_{m,n}}$ for some $A(c)\in \mathbb{F}_d[c]$, this will clearly prove the theorem. Assume for the sake of contradiction that $\overline{H}(c) = f_1(c)^{\alpha_1}\cdots f_k(c)^{\alpha_k}$, where for at least one $i$ we have $0<\alpha_i<M_{m,n}$. This gives $H(c) = f_1(c)^{\alpha_1}\cdots f_k(c)^{\alpha_k}+dH_1(c)$ for some $H_1(c)\in \mathbb{Z}[c]$. In particular, we have $f_1(c_0)^{\alpha_1}\cdots f_k(c_0)^{\alpha_k} = -dH_1(c_0)$. The last equality implies that the product $(d,\tilde{f}_1(c_0))^{\alpha_1}\cdots (d,\tilde{f}_k(c_0))^{\alpha_k}$ is contained in the ideal $(d)$, because all the generators of the product ideal are divisible by $d$. This gives
\begin{equation}
(d) | (d,\tilde{f}_1(c_0))^{\alpha_1}\cdots (d,\tilde{f}_k(c_0))^{\alpha_k}.
\end{equation}
Now (2.3) and (2.4) together will clearly imply that if $\alpha_i<M_{m,n}$, then $(d,\tilde{f}_i(c_0))$ must be the unit ideal in $\mathcal{O}_K$, which contradicts Lemma $2.5$. Hence, we conclude that for all $i$ we have $\alpha_i=0$ or $\alpha_i=M_{m,n}$, which shows that $\overline{H}(c) = [A(c)]^{M_{m,n}}$ for some $A(c)\in \mathbb{F}_d[c]$, as desired. 
\end{proof}
\section{Proof of Theorem $1.9$}
The goal of this section is to prove Theorem $1.9$. We start by recalling a basic fact from algebraic number theory:
\begin{theorem}
\cite{Lang} Let $K/\mathbb{Q}$ be an algebraic number field of degree $n$ with ring of integers $\mathcal{O}_K$ and discriminant $D_K$. Let $\alpha\in \mathcal{O}_K$ with minimal polynomial $f(x)$ be such that $K=\mathbb{Q}(\alpha)$. Then
$$\text{Disc}(f(x)) = [\mathcal{O}_K:\mathbb{Z}[\alpha]]^2D_K.$$
\end{theorem}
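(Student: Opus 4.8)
The plan is to prove the identity by interpreting both $\text{Disc}(f(x))$ and $D_K$ as values of a single invariant --- the trace-form discriminant --- evaluated on two different $\mathbb{Z}$-bases of lattices inside $K$, and then to compare those bases by a change-of-basis matrix whose determinant is exactly the index. For any $n$-tuple $\beta_1,\dots,\beta_n\in K$, set $\text{disc}(\beta_1,\dots,\beta_n)=\det\big(\text{Tr}_{K/\mathbb{Q}}(\beta_i\beta_j)\big)_{1\le i,j\le n}$. Writing $\sigma_1,\dots,\sigma_n$ for the embeddings of $K$ into $\mathbb{C}$ and $B=(\sigma_i(\beta_j))_{i,j}$, the identity $\text{Tr}_{K/\mathbb{Q}}(\beta_i\beta_j)=\sum_k\sigma_k(\beta_i)\sigma_k(\beta_j)$ shows $\big(\text{Tr}_{K/\mathbb{Q}}(\beta_i\beta_j)\big)=B^{\mathsf T}B$, so that $\text{disc}(\beta_1,\dots,\beta_n)=(\det B)^2$.

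Next I would identify the two quantities in the statement as instances of this invariant. Applying the construction to an integral basis $\omega_1,\dots,\omega_n$ of $\mathcal{O}_K$ recovers, by definition, $D_K=\text{disc}(\omega_1,\dots,\omega_n)$. Applying it instead to the power basis $1,\alpha,\dots,\alpha^{n-1}$ of $\mathbb{Z}[\alpha]$, the matrix $(\sigma_i(\alpha^{j-1}))$ is a Vandermonde matrix in the conjugates $\alpha_i=\sigma_i(\alpha)$, whose determinant is $\prod_{i<j}(\alpha_j-\alpha_i)$; hence $\text{disc}(1,\alpha,\dots,\alpha^{n-1})=\prod_{i<j}(\alpha_i-\alpha_j)^2=\text{Disc}(f(x))$, the last equality being the standard formula for the discriminant of a monic polynomial in terms of its roots.

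It then remains to relate the two bases. Because $\mathbb{Z}[\alpha]\subseteq\mathcal{O}_K$, I can write $\alpha^{i-1}=\sum_j M_{ij}\omega_j$ with $M=(M_{ij})\in M_n(\mathbb{Z})$. Substituting into the trace form gives $\big(\text{Tr}_{K/\mathbb{Q}}(\alpha^{i-1}\alpha^{j-1})\big)=M\big(\text{Tr}_{K/\mathbb{Q}}(\omega_i\omega_j)\big)M^{\mathsf T}$, so taking determinants yields the transformation law $\text{Disc}(f(x))=(\det M)^2 D_K$. Finally, since $M$ is the matrix carrying a $\mathbb{Z}$-basis of $\mathcal{O}_K$ to a $\mathbb{Z}$-basis of the sublattice $\mathbb{Z}[\alpha]$, the structure theorem for finitely generated modules over $\mathbb{Z}$ (Smith normal form) gives $|\det M|=[\mathcal{O}_K:\mathbb{Z}[\alpha]]$; combining the last two facts completes the proof.

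I expect the only genuinely delicate points to be the two identifications in the second paragraph: that the trace-form determinant on the power basis coincides with the polynomial discriminant (which rests on the Vandermonde evaluation together with $\det(B^{\mathsf T}B)=(\det B)^2$), and that $D_K$ is independent of the chosen integral basis (which holds because any two integral bases differ by a matrix in $GL_n(\mathbb{Z})$, of determinant $\pm 1$). Neither is hard, but both require care that the sign ambiguities square away --- which is precisely why the index and the two determinants all enter as squares.
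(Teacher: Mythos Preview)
Your proof is correct and is the standard argument found in most algebraic number theory texts (including Lang, to which the paper refers). Note, however, that the paper does not actually prove this statement: it is quoted as a classical fact with a citation to \cite{Lang} and is used only as background to motivate the relevance of Theorem~1.9 and Dedekind's criterion. So there is no ``paper's own proof'' to compare against; your argument simply supplies what the paper omits by reference.
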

Understanding the rational primes which divide the index $[\mathcal{O}_K:\mathbb{Z}[\alpha]]$ is important for the following reason: By Dedekind's Factorization Theorem, for a rational prime $p$ not dividing the index $[\mathcal{O}_K:\mathbb{Z}[\alpha]]$, the factorization of the ideal $(p)$ in $\mathcal{O}_K$ can be obtained from the factorization of the reduced polynomial $\overline{f}(x)\in \mathbb{F}_p[x]$ (See for instance \cite{Marcus} for a precise statement).\\

Next, we recall Dedekind's criterion, which will be the most important tool for the proof of Theorem $1.9$.
\begin{theorem}\cite{Dedekind}
Let $\alpha$ be an algebraic integer, $f$ its minimal polynomial, $K=\mathbb{Q}(\alpha)$, and $\mathcal{O}_K$ its ring of integers. Let $p$ be a rational prime. Let $\overline{f} = f_1^{e_1}\cdots f_k^{e_k}$ be the decomposition of $\overline{f}$ in $\mathbb{F}_p[x]$. Let $\tilde{f}_i\in \mathbb{Z}[x]$ be any lift of $f_i$, and $g\in \mathbb{Z}[x]$ such that $f=f_1^{e_1}\cdots f_k^{e_k}+pg$. The following are equivalent:\\
\item[(i)] $p\not | \text{ }[\mathcal{O}_K:\mathbb{Z}[\alpha]]$.\\
\item[(ii)] For all $i$, either $e_i=1$ or $f_i$ does not divide $\overline{g}$ in $\mathbb{F}_p[x]$.
\end{theorem}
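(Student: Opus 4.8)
The plan is to prove the two implications together by reducing $p$-maximality of $\mathbb{Z}[\alpha]$ to a single explicit computation with the $p$-radical of the order. Throughout I write $R=\mathbb{Z}[\alpha]\cong\mathbb{Z}[x]/(f)$ and $\mathcal{O}=\mathcal{O}_K$, so that $\overline{R}:=R/pR\cong\mathbb{F}_p[x]/(\overline{f})$, and I keep the notation $g$ from the statement, so $f=\prod_i\tilde f_i^{\,e_i}+pg$. Condition (i), that $p\nmid[\mathcal{O}:R]$, I would first reformulate as $R+p\mathcal{O}=\mathcal{O}$: the finite group $\mathcal{O}/R$ has order prime to $p$ exactly when multiplication by $p$ is an automorphism of it, i.e.\ exactly when $p\mathcal{O}+R=\mathcal{O}$. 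This makes the whole question local at $p$ and is the form I would work with.

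The structural input I would use is the multiplier ring of the $p$-radical. Let $\overline{u}=\prod_{i=1}^{k}\overline{f}_i=\mathrm{rad}(\overline{f})$ with a monic lift $u\in\mathbb{Z}[x]$, and set $I:=pR+u(\alpha)R$, the preimage in $R$ of the nilradical of $\overline{R}$. Form $R^{\dagger}:=\{\beta\in K:\beta I\subseteq I\}$. The key lemma, which I would take from the theory of orders (it is precisely the step underlying the ``round $2$'' maximal-order algorithm), is that $R\subseteq R^{\dagger}\subseteq\mathcal{O}$ and that $R$ is $p$-maximal if and only if $R^{\dagger}=R$; if $R$ fails to be maximal at $p$, then the multiplier ring of its $p$-radical already detects this by growing strictly. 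Granting this, the theorem reduces to computing $R^{\dagger}$ and determining when $R^{\dagger}\supsetneq R$.

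For the computation I would factor $\overline{f}=\overline{u}\,\overline{v}$ with $\overline{v}=\prod_i\overline{f}_i^{\,e_i-1}$ and monic lift $v$, so that with matching lifts $u(\alpha)v(\alpha)=\prod_i\tilde f_i(\alpha)^{e_i}=f(\alpha)-pg(\alpha)=-pg(\alpha)$; thus $u(\alpha)v(\alpha)=pT(\alpha)$ in $R$ with $T=(uv-f)/p=-g$, so $\overline{T}=-\overline{g}$. A fraction $w(\alpha)/p$ with $w\in\mathbb{Z}[x]$ lies in $R^{\dagger}$ iff it carries both generators $p$ and $u(\alpha)$ of $I$ back into $I$, i.e.\ iff $w(\alpha)\in I$ and $w(\alpha)u(\alpha)/p\in I$. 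Unwinding these two conditions inside $\overline{R}=\mathbb{F}_p[x]/(\overline{f})$ shows that a nontrivial such multiplier exists exactly when $\overline{u}$, $\overline{v}$ and $\overline{T}$ share a common factor in $\mathbb{F}_p[x]$. Hence $R^{\dagger}\supsetneq R\iff\gcd(\overline{u},\overline{v},\overline{T})\neq1$. Since $\overline{f}_i\mid\overline{u}$ for every $i$, while $\overline{f}_i\mid\overline{v}\iff e_i\geq2$, and $\overline{T}=-\overline{g}$, such a common factor exists iff some $\overline{f}_i$ with $e_i\geq2$ divides $\overline{g}$ — which is exactly the negation of (ii). Combined with the structural lemma this yields (i)$\iff$(ii).

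The hard part will be the explicit determination of $R^{\dagger}$: one must show that every element of $R^{\dagger}$ is represented by some $w(\alpha)/p$, and then carefully track which of these fractions are forced to be integral, so that the condition $\beta I\subseteq I$ collapses to the clean divisibility $\gcd(\overline{u},\overline{v},\overline{T})\neq1$ without spuriously losing or gaining factors. A secondary point needing care is the structural lemma itself, that a single pass to the multiplier ring of the $p$-radical suffices to detect non-maximality; I would either cite it or prove it by localizing at $p$ and using that a reduced, integrally closed $\mathbb{Z}_p$-order is a finite product of discrete valuation rings. As a consistency check I note that Theorem $3.1$ already forces $p\mid[\mathcal{O}_K:\mathbb{Z}[\alpha]]\Rightarrow p^2\mid\mathrm{Disc}(f)\Rightarrow\overline{f}$ has a repeated factor, i.e.\ some $e_i\geq2$; this is precisely the coarse necessary condition that Dedekind's criterion (ii) sharpens.
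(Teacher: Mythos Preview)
The paper does not actually prove this statement: Theorem~3.2 is stated with the citation \cite{Dedekind} and invoked as a known tool in the proof of Theorem~1.9, with no argument given. So there is no ``paper's own proof'' to compare your proposal against.

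That said, your outline is a sound route to Dedekind's criterion. The reduction of $p$-maximality of $R=\mathbb{Z}[\alpha]$ to the equality $R^{\dagger}=R$, where $R^{\dagger}$ is the multiplier ring of the $p$-radical $I=pR+u(\alpha)R$, is exactly the structural lemma behind the Pohst--Zassenhaus ``round~2'' algorithm, and it is standard (though you are right to flag it as something that must be either cited or proved). Your computation $u(\alpha)v(\alpha)=-pg(\alpha)$ is correct, and the conclusion that $R^{\dagger}\supsetneq R$ precisely when $\gcd(\overline{u},\overline{v},\overline{g})\neq 1$ is the right target; since $\gcd(\overline{u},\overline{v})=\prod_{e_i\geq 2}\overline{f}_i$, this gcd is nontrivial exactly when some $\overline{f}_i$ with $e_i\geq 2$ divides $\overline{g}$, which is the negation of~(ii). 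The step you correctly identify as delicate---showing that the two membership conditions $w(\alpha)\in I$ and $w(\alpha)u(\alpha)/p\in I$ collapse to this single gcd condition---does require a careful pass through $\overline{R}=\mathbb{F}_p[x]/(\overline{f})$, but it goes through without surprises once one writes $w=us+pt$ and tracks when $u^2s/p$ lands in $I$.
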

We also need the following lemma, which is a special case of (Lemma $23$, \cite{BEK}). We give an alternative proof in this special case.
\begin{lemma}
Let $d$ be a prime. Then we have$$
\text{Res}(G_{d,m,n},G_{d,0,k}) = \left\{
\begin{array}{ll}
\pm d^{\text{deg}(G_{d,0,n})} & \text{if } n=k \\
\pm 1 & \text{if } n\neq k.
\end{array}
\right.
$$
\end{lemma}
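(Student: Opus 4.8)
The plan is to compute the resultant $\operatorname{Res}(G_{d,m,n}, G_{d,0,k})$ by exploiting the number-theoretic machinery already built in Section 2, together with Lemma 2.3. Recall that for algebraic $\alpha$ with minimal polynomial $f$ and any $h \in \mathbb{Z}[x]$ one has $N_{K/\mathbb{Q}}(h(\alpha)) = \pm\operatorname{Res}(f,h)$ (up to leading-coefficient factors, which are $1$ here since all these polynomials are monic). So if $G_{d,m,n}$ is irreducible with root $c_0$ and $K = \mathbb{Q}(c_0)$, then $\operatorname{Res}(G_{d,m,n}, G_{d,0,k}) = \pm N_{K/\mathbb{Q}}(G_{d,0,k}(c_0))$. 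The strategy is to evaluate this norm using the factorization of the ideal $(d)$ in $\mathcal{O}_K$ from Remark 2.8, being careful about the reducibility of $G_{d,m,n}$ in general by passing to each irreducible factor and multiplying the contributions.

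First I would handle the case $n \neq k$. By Lemma 2.2 (applied with $n$ replaced by $k$, using that $c_0$ is a root of $G_{d,m,n}$, not of $G_{d,0,k}$), one should show $G_{d,0,k}(c_0)$ is a unit in $\mathcal{O}_K$: indeed $a_k = G_{d,0,k}(c_0) \cdot (\text{unit})$, and when $k \neq n$ the element $a_k$ is a unit by Theorem 2.1 — if $n \nmid k$ then $a_k$ is a unit directly from part (a); if $n \mid k$ one uses that the multiplicative structure forces $a_k$ to still be a unit, or more carefully that $(a_k)^{M_{m,n}} = (d) = (a_n)^{M_{m,n}}$ combined with $a_n = G_{d,0,n}(c_0)\cdot(\text{unit})$ and $G_{d,0,k}(c_0) \mid a_k \mid a_n^{?}$ — actually the cleanest route is: since $c_0$ does not make the critical orbit have $k$ as a pre-period multiple, $a_k - a_{k-1}$ type factors cancel and one reads off from the rigid divisibility sequence that $G_{d,0,k}(c_0)$ divides a product of units. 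Either way, $G_{d,0,k}(c_0) \in \mathcal{O}_K^\times$, so its norm is $\pm 1$, giving $\operatorname{Res}(G_{d,m,n}, G_{d,0,k}) = \pm 1$. This argument is insensitive to reducibility of $G_{d,m,n}$ since a unit remains a unit in each factor's number field.

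For $n = k$, I would use Lemma 2.2 to write $a_n = G_{d,0,n}(c_0) u$ with $u$ a unit, hence $(G_{d,0,n}(c_0)) = (a_n)$ as ideals of $\mathcal{O}_K$, and therefore $|N_{K/\mathbb{Q}}(G_{d,0,n}(c_0))| = [\mathcal{O}_K : (a_n)] = \mathfrak{N}((a_n))$. By Theorem 2.1(b), $(a_n)^{M_{m,n}} = (d)$, so $\mathfrak{N}((a_n))^{M_{m,n}} = \mathfrak{N}((d)) = d^{[K:\mathbb{Q}]} = d^{\deg G_{d,m,n}}$. By Lemma 2.3, $\deg G_{d,m,n} = M_{m,n} \cdot \deg G_{d,0,n}$ (reducing mod $d$ preserves degree since these are monic). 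Hence $\mathfrak{N}((a_n)) = d^{\deg G_{d,0,n}}$, i.e. $N_{K/\mathbb{Q}}(G_{d,0,n}(c_0)) = \pm d^{\deg G_{d,0,n}}$, which is exactly the claimed value of the resultant. If $G_{d,m,n}$ is reducible with irreducible factors $H_1, \dots, H_r$ of degrees summing to $M_{m,n}\deg G_{d,0,n}$, one runs this argument in each $K_j = \mathbb{Q}(\text{root of }H_j)$ and multiplies: $\operatorname{Res}(G_{d,m,n}, G_{d,0,n}) = \prod_j \operatorname{Res}(H_j, G_{d,0,n}) = \prod_j (\pm d^{\deg H_j / M_{m,n}})$, and since $\sum_j \deg H_j = M_{m,n}\deg G_{d,0,n}$ the exponents add to $\deg G_{d,0,n}$, recovering $\pm d^{\deg G_{d,0,n}}$. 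The main obstacle I anticipate is the bookkeeping in the $n \neq k$ case — cleanly justifying that $a_k$ (equivalently $G_{d,0,k}(c_0)$) is a unit when $n \mid k$ but $k \neq n$, which requires invoking the rigid divisibility sequence property (as in the proof of Lemma 2.2) rather than just Theorem 2.1(a); one must argue that the "new" part of $a_k$ not already accounted for by earlier orbit terms is trivial because $n \nmid$ (relevant indices) forces those to be units and $G_{d,0,n}$ already appears as the primitive part of $a_n$.
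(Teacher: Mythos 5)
Your overall strategy is exactly the paper's: factor $G_{d,m,n}$ into irreducible factors $H_1,\dots,H_l$ over $\mathbb{Q}$, write $\operatorname{Res}(G_{d,m,n},G_{d,0,k})=\prod_i\operatorname{Res}(H_i,G_{d,0,k})=\prod_i \pm N_{K_i/\mathbb{Q}}(G_{d,0,k}(c_i))$, and compute each norm via Theorem 2.1 and Lemma 2.2. Your $n=k$ case and your $n\nmid k$ case are correct and match the paper (in the $n=k$ case the paper likewise gets $\pm d^{\deg H_i/M_{m,n}}$ per factor and sums the exponents using $\deg G_{d,m,n}=M_{m,n}\deg G_{d,0,n}$ from Lemma 2.3).

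The one place you have not actually closed the argument is the subcase $n\mid k$, $k\neq n$, which you yourself flag as the main obstacle, and where your phrasing contains a false assertion: you say the multiplicative structure "forces $a_k$ to still be a unit" and later write "$a_k$ (equivalently $G_{d,0,k}(c_0)$) is a unit." These are not equivalent, and $a_k$ is \emph{not} a unit there: by Theorem 2.1(b), $(a_k)^{M_{m,n}}=(d)$ whenever $n\mid k$. What is true, and what the paper proves, is the following short argument: since $\{a_j\}$ is a rigid divisibility sequence and $G_{d,0,k}(c_0)$ is the primitive part of $a_k$, the element $G_{d,0,k}(c_0)$ divides $a_k/a_n$ in $\mathbb{Z}[c_0]$; but $(a_k)^{M_{m,n}}=(d)=(a_n)^{M_{m,n}}$ gives $(a_k)=(a_n)$, so $a_k/a_n$ is a unit in $\mathcal{O}_K$, hence so is $G_{d,0,k}(c_0)$, and its norm is $\pm1$. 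Your heuristic remark that the "new part of $a_k$ not already accounted for by earlier orbit terms is trivial" is the right intuition, but as written ("$a_k-a_{k-1}$ type factors cancel," "divides a product of units") it is not a proof; replacing it with the divisibility of $a_k/a_n$ just described completes your argument and brings it in line with the paper.
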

\begin{proof}
Suppose $G_{d,m,n}(c)\in \mathbb{Z}[c]$ factors as
$$G_{d,m,n}(c)=H_1(c)\cdots H_l(c)$$
for some $H_1(c),\dots,H_l(c)\in \mathbb{Z}[c]$, and let $c_1,\dots,c_l$ be some roots of $H_1(c),\dots,H_l(c)$, respectively. Set $K_i=\mathbb{Q}(c_i)$ for $i=1,\dots,l$. Also define $a_{s}^{(i)}=f_{c_i,d}^s(0)$ for $i=1,\dots,l$. Note that we have 
$$\text{Res}(G_{d,m,n},G_{d,0,k})=\prod_{i=1}^{l}\text{Res}(H_i,G_{d,0,k}).$$
Recall as in the proof of Lemma $2.5$ that $\text{Res}(H_i,G_{d,0,k})=N_{K_i/\mathbb{Q}}(G_{d,0,k}(c_i))$ for $i=1,\dots,l$. Then, if $k=n$, we obtain
$$\text{Res}(H_i,G_{d,0,k})=\text{Res}(H_i,G_{d,0,n})=N_{K_i/\mathbb{Q}}(G_{d,0,n}(c_i))=\pm N_{K_i/\mathbb{Q}}(a_n^{(i)})=\pm d^{\frac{\text{deg}(H_i)}{M_{m,n}}},$$
where the third equality follows from Lemma $2.2$, and the last equality follows by using the fact that norm is multiplicative, because we have $(a_n^{(i)})^{M_{m,n}}=(d)$ in $\mathcal{O}_{K_i}$ (from Theorem $2.1$). Thus, we get
$$\text{Res}(G_{d,m,n},G_{d,0,k})=\prod_{i=1}^{l}\text{Res}(H_i,G_{d,0,k})=\pm d^{\frac{\text{deg}(G_{d,m,n})}{M_{m,n}}}=\pm d^{\text{deg}(G_{d,0,n})},$$
which gives us the result we want. Note that we used Lemma $2.3$ for the last equality. Now assume $k\neq n$. First note that we will be done if we can show that $\text{Res}(H_i,G_{d,0,k})=\pm 1$ for $i=1,\dots,l$. There are two cases: Either $n|k$ or $n\not|\text{ }k$. If $n|k$, since the sequence $\{a_j^{(i)}\}_{j\geq 1}$ is a rigid divisibility sequence, and $G_{d,0,k}(c_i)$ is the primitive part of $a_k^{(i)}$ for $i=1,\dots,l$, it follows that $G_{d,0,k}(c_i)$ divides $\frac{a_k^{(i)}}{a_n^{(i)}}$ in $\mathbb{Z}[c_i]$, which, by Theorem $2.1$, implies that $G_{d,0,k}(c_i)$ is a unit in $\mathcal{O}_{K_i}$, i.e. $N_{K_i/\mathbb{Q}}(G_{d,0,k}(c_i))=\pm 1$, which gives
$\text{Res}(H_i,G_{d,0,k})=\pm 1$. If $n\not|\text{ }k$, then $a_k^{(i)}$ is a unit in $\mathcal{O}_{K_i}$ for $i=1,\dots,l$, but $G_{d,0,k}(c_i)$ divides $a_k^{(i)}$ in $\mathbb{Z}[c_i]$, hence $G_{d,0,k}(c_i)$ is a unit in $\mathcal{O}_{K_i}$, i.e. $N_{K_i/\mathbb{Q}}(G_{d,0,k}(c_i))=\pm 1$, which again implies that $\text{Res}(H_i,G_{d,0,k})=\pm 1$, as desired.
\end{proof}
We are finally ready to prove Theorem $1.9$.
\begin{proof}[Proof of Theorem $1.9$]
Using the proof of Theorem $1.4$, we can write
the factorization of $G_{d,m,n}(c)$ over $\mathbb{Q}$ as $$G_{d,m,n}(c) = [A_1(c)^{M_{m,n}}+dB_1(c)]\cdots [A_l(c)^{M_{m,n}}+dB_l(c)]$$
for some $A_1,\dots,A_l,B_1,\dots,B_l\in \mathbb{Z}[c]$, and note that $A_i(c)\in \mathbb{Z}[c]$ are not necessarily irreducible. By Dedekind's criterion, to prove that $d\not | \text{ }[\mathcal{O}_K:\mathbb{Z}[c_0]]$ for any root $c_0$ of $G_{d,m,n}(c)$, it suffices to show that $\overline{A}_i(c)$ and $\overline{B}_i(c)$ have no common factor in $\mathbb{F}_d[c]$ for $i=1,\dots,l$. To prove this, we will do some computations with resultants.\\

Using Lemma $2.3$, we can write
\begin{equation}
G_{d,0,n}(c) = A_1(c)\cdots A_l(c)+dG(c)
\end{equation}
for some $G(c)\in \mathbb{Z}[c]$. First let
\begin{equation}
X_1=\text{Res}(A_1(c)^{M_{m,n}}+dB_1(c),G_{d,0,n}(c))\text{Res}(A_2(c)\cdots A_l(c),G_{d,0,n}(c)).
\end{equation}
Hence, we have
\begin{equation}
X_1= \text{Res}(A_1(c)^{M_{m,n}}A_2(c)\cdots A_l(c)+dA_2(c)\cdots A_l(c)B_1(c),G_{d,0,n}(c))
\end{equation}
Using (3.1), this gives
\begin{equation}
X_1=\text{Res}(G_{d,0,n}(c)A_1(c)^{M_{m,n}-1}+dA_2(c)\cdots A_l(c)B_1(c)-dG(c)A_1(c)^{M_{m,n}-1},G_{d,0,n}(c))
\end{equation}
Thus, by the basic properties of resultants, we get
\begin{align}
X_1  &=  \text{Res}(d(A_2(c)\cdots A_l(c)B_1(c)-G(c)A_1(c)^{M_{m,n}-1}),G_{d,0,n}(c))\\
 &=  d^{\text{deg}(G_{d,0,n})}\text{Res}(A_2(c)\cdots A_l(c)B_1(c)-G(c)A_1(c)^{M_{m,n}-1},G_{d,0,n}(c)).
\end{align}
On the other hand, using (3.1) in the second factor in (3.2), we also have
\begin{align}
X_1 &= \text{Res}(A_1(c)^{M_{m,n}}+dB_1(c),G_{d,0,n}(c))\text{Res}(A_2(c)\cdots A_l(c),A_1(c)\cdots A_l(c)+dG(c))\\
&=\text{Res}(A_1(c)^{M_{m,n}}+dB_1(c),G_{d,0,n}(c))\text{Res}(A_2(c)\cdots A_l(c), G(c))d^{\text{deg}(A_2(c)\cdots A_l(c))}
\end{align}
Hence, in (3.6) and (3.8), we obtained two different expressions for $X_1$. Doing the same thing for each $1\leq i\leq l$, and multiplying out $X_i$s, we will obtain two different expressions for the product $X_1\cdots X_l$. Namely, if we write each $X_i$ similarly to (3.6), we get
\begin{equation}
X_1\cdots X_l = d^{l\text{deg}(G_{d,0,n})}\prod_{i=1}^{l}\text{Res}(\frac{A_1(c)\cdots A_l(c)}{A_i(c)}B_i(c)-G(c)A_i(c)^{M_{m,n}-1},G_{d,0,n}).
\end{equation}
On the other hand, if we write each $X_i$ similarly to (3.8), we obtain
\begin{align}
X_1\dots X_l  &=  \text{Res}(\prod_{i=1}^{l}(A_i(c)^{M_{m,n}}+dB_i(c)),G_{d,0,n}(c))\text{Res}(A_1(c)\cdots A_l(c), G(c))^{l-1}d^{(l-1)\text{deg}(G_{d,0,n})}\\
&=\text{Res}(G_{d,m,n}(c),G_{d,0,n}(c))\text{Res}(A_1(c)\cdots A_l(c), G(c))^{l-1}d^{(l-1)\text{deg}(G_{d,0,n})}\\
&=  \pm d^{l\text{deg}(G_{d,0,n})}\text{Res}(A_1(c)\cdots A_l(c), G(c))^{l-1},
\end{align} where the last equality follows from Lemma $3.3$.
Hence, equating (3.9) and (3.12), and simplifying, we get
\begin{equation}
\pm[\text{Res}(A_1(c)\cdots A_l(c), G(c))]^{l-1} = \prod_{i=1}^{l}\text{Res}(\frac{A_1(c)\cdots A_l(c)}{A_i(c)}B_i(c)-G(c)A_i(c)^{M_{m,n}-1},G_{d,0,n}).
\end{equation}
Recall that our goal was to show that $\overline{A}_i(c)$ and $\overline{B}_i(c)$ have no common factors in $\mathbb{F}_d[c]$. Recalling (3.1), it is clear that to prove this, it suffices to show that the right-hand side of (3.13) is not divisible by $d$. So, we will be done if we can show that $\text{Res}(A_1(c)\cdots A_l(c), G(c))$ is not divisible by $d$, i.e., it is enough to show that $\text{Res}(A_i(c),G(c))$ is not divisible by $d$ for each $i$. We need the following lemma to achieve this.
\begin{lemma}
	Let $p$ be a rational prime, $f(x)\in \mathbb{Z}[x]$ a monic polynomial (not necessarily irreducible) such that $\text{Disc}(f)$ is relatively prime to $p$. Suppose that the reduced polynomial $\overline{f}(x)\in \mathbb{F}_p[x]$ factors as
	$$\overline{f}(x) = f_1(x)\cdots f_k(x),$$
	where $f_1(x),\cdots,f_k(x)\in \mathbb{F}_p[x]$ are irreducible. Then we can choose lifts $\tilde{f_1}(x),\cdots,\tilde{f_k}(x)\in \mathbb{Z}[x]$ of $f_1(x),\dots,f_k(x)$, respectively, and write $f(x)=\tilde{f_1}(x)\cdots \tilde{f_k}(x)+pF(x)$ such that $\text{Res}(\tilde{f_i}(x),F(x))$ is relatively prime to $p$ for all $i$.
\end{lemma}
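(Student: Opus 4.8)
The plan is to build the lifts $\tilde f_i$ from a single step of Hensel lifting together with a symmetric perturbation by $p$. Since $\text{Disc}(f)$ is prime to $p$, the reduction $\overline f\in\mathbb{F}_p[x]$ is separable, so the $f_i$ are pairwise coprime in $\mathbb{F}_p[x]$; after absorbing unit scalars we may assume each $f_i$ is monic (and of degree $\ge 1$, being irreducible). Hensel's lemma, applied modulo $p^2$, then produces monic polynomials $G_1,\dots,G_k\in\mathbb{Z}[x]$ with $\overline{G_i}=f_i$ for all $i$ and $G_1\cdots G_k\equiv f\pmod{p^2}$.

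Next I would set $\tilde f_i:=G_i+p$ for every $i$. Each $\tilde f_i$ is still monic of the same degree as $G_i$ and still reduces to $f_i$, so it is an admissible lift, and I can write $f=\tilde f_1\cdots\tilde f_k+pF$ with $F\in\mathbb{Z}[x]$. Expanding $\prod_i(G_i+p)=\prod_i G_i+p\sum_i\prod_{j\ne i}G_j+p^2(\cdots)$ and using $\prod_i G_i\equiv f\pmod{p^2}$ gives the key identity $\overline F=-\sum_i\prod_{j\ne i}f_j$ in $\mathbb{F}_p[x]$. Reducing this modulo $f_i$ leaves only the $i$-th summand, $\overline F\equiv-\prod_{j\ne i}f_j\pmod{f_i}$, which is a unit modulo $f_i$ by pairwise coprimality of the $f_j$. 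Hence $f_i\nmid\overline F$, so $\overline{\tilde f_i}=f_i$ and $\overline F$ have no common factor in $\mathbb{F}_p[x]$.

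To conclude, I use that $\tilde f_i$ is monic, so the resultant is compatible with reduction mod $p$: $\text{Res}(\tilde f_i,F)\bmod p=\text{Res}(f_i,\overline F)$, and the right side is nonzero precisely because $f_i$ and $\overline F$ are coprime (and $f_i$ is irreducible). Therefore $p\nmid\text{Res}(\tilde f_i,F)$ for every $i$, which is exactly the assertion of the lemma.

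The step requiring genuine care — and the reason a \emph{single} Hensel step is not optional — is the computation of $\overline F$. If one only had $G_1\cdots G_k\equiv f\pmod p$, then $\tfrac1p(f-\prod_iG_i)$ would contribute an uncontrolled term to $\overline F$ that could land in the ideal $(f_i)$ for some $i$ and wreck coprimality; passing to $\pmod{p^2}$ annihilates that term. Likewise, perturbing \emph{all} the factors by $p$ (instead of just one) is what makes $\overline F$ simultaneously prime to every $f_i$; perturbing a single factor would leave $\overline F$ divisible by all the others. The rest — the monic normalization and the degree bookkeeping needed to reduce resultants mod $p$ — is routine.
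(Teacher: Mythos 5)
Your proof is correct, but it follows a genuinely different route from the paper's. The paper starts from the honest factorization of $f$ over $\mathbb{Z}$, takes the lifts $\tilde f_i$ induced by grouping the mod-$p$ factors according to the integral factors they divide, and then corrects iteratively: if some $f_i$ shares a factor with $\overline F$, it replaces $\tilde f_i$ by $\tilde f_i+p$ (which changes the relevant cofactor $F_i$ by an explicit multiple of the other lifts), uses the separability of $\overline f$ to see the new choice works, and then checks that later corrections do not spoil earlier ones. You instead perform one Hensel lift to get monic $G_i$ with $\overline{G_i}=f_i$ and $G_1\cdots G_k\equiv f\pmod{p^2}$, shift \emph{every} factor at once by setting $\tilde f_i=G_i+p$, and read off the explicit formula $\overline F=-\sum_i\prod_{j\neq i}f_j$, whose reduction mod each $f_i$ is a unit by pairwise coprimality; the final passage to resultants via monicity of $\tilde f_i$ is the same in both arguments, as is the essential use of $p\nmid\mathrm{Disc}(f)$ to get pairwise coprime $f_i$. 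What your version buys is uniformity: there is no iteration and no interference bookkeeping, since the single closed-form expression for $\overline F$ handles all $i$ simultaneously; the cost is invoking Hensel lifting of factorizations (mod $p^2$), a slightly heavier standard tool. What the paper's version buys is that it stays entirely elementary and keeps the lifts tied to the actual $\mathbb{Z}[x]$ factorization of $f$, at the price of the fiddlier step of verifying that successive $+p$ adjustments do not undo previous ones. Your closing remarks (why mod $p^2$ rather than mod $p$, and why all factors must be shifted rather than one) are accurate and correctly identify where the argument would otherwise fail.
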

\begin{proof}[Proof of Lemma $3.4$]
	Considering the factorization of $f(x)\in \mathbb{Z}[x]$, without loss of generality, we can write
	$$f(x)=[\tilde{f}_1(x)\cdots \tilde{f}_{i_1}(x)+pF_1(x)]\cdots [\tilde{f}_{i_{l-1}+1}(x)\cdots \tilde{f}_{i_l}(x)+pF_l(x)].$$
	Then we have $f(x)=\tilde{f}_1(x)\cdots \tilde{f}_k(x)+pF(x)$, where $$F(x)=F_1(x)\tilde{f}_{i_{1}+1}(x)\cdots \tilde{f}_{i_l}(x)+\tilde{f}_1(x)K(x)+pL(x)$$
	for some $K(x), L(x)\in \mathbb{Z}[x]$. We would like to have that $f_1(x)$ and $\overline{F}(x)$ have no common factor in $\mathbb{F}_p[x]$ (This will finish the proof, because one can then do the same thing for all $i$). We have
	$$\overline{F}(x) = \overline{F_1}(x)f_{i_1+1}(x)\cdots f_{i_l}(x)+f_1(x)\overline{K}(x).$$
	If $\overline{F}(x)$ and $f_1(x)$ had a common factor in $\mathbb{F}_p[x]$, then $\overline{F_1}(x)f_{i_1+1}\cdots f_{i_l}(x)$ and $f_1(x)$ would have a common factor in $\mathbb{F}_p[x]$. But, this would force $\overline{F_1}(x)$ and $f_1(x)$ to have a common factor in $\mathbb{F}_p[x]$, since $\text{Disc}(f)$ is relatively prime to $p$. So, if $f_1$ and $\overline{F}_1(x)$ have no common factor in $\mathbb{F}_p[x]$, we are already done. If they have a common factor, replace $\tilde{f}_1(x)$ by $\tilde{g}_1(x)=\tilde{f}_1(x)+p$, which, since $\tilde{f}_1(x)\cdots \tilde{f}_{i_1}(x)+pF_1(x)$ is a fixed polynomial in $\mathbb{Z}[x]$, will replace $F_1(x)$ by $G_1(x) = F_1(x)-\tilde{f}_2(x)\cdots \tilde{f}_{i_1}(x)$. Now $\overline{\tilde{g}}_1(x)=f_1(x)$ cannot have a common factor with $\overline{G}_1(x)$ in $\mathbb{F}_p[x]$, because $f_1(x)|\overline{F}_1(x)$ (since $f_1(x)$ and $\overline{F}_1(x)$ are assumed to have a common factor, and $f_1(x)\in \mathbb{F}_p[x]$ is irreducible), and $f_1$ is relatively prime to $f_j$ in $\mathbb{F}_p[x]$ for $j=2,\dots,i_1$ (recall that Disc$(f)$ was relatively prime to $p$). It is easy to see that we can do the same thing for each $f_i$ without affecting the fact that $f_j$ and $\overline{F}_1$ have no common factor in $\mathbb{F}_p[x]$ for $j<i$, which finishes the proof.
\end{proof}
Noting that Disc$(G_{d,0,n})$ is relatively prime to $d$ (see for instance Lemma $3$ in \cite{Buff}), now the proof of Theorem $1.9$ clearly follows from Lemma $3.4$.
\end{proof}

\begin{corollary}
Let $d$ be a prime and $m\neq 0$. Suppose that $c_0$ is a root of $G_{d,m,n}(c)$. Set $K=\mathbb{Q}(c_0)$. Then all the elements in the critical orbit $\{a_1,\dots,a_{m+n-1}\}$ of $f_{c,d}$ are square-free in $\mathcal{O}_K$.
\end{corollary}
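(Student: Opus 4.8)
The plan is to deduce the square-freeness of every orbit element from an explicit square-free factorization of the principal ideal $(d)$ in $\mathcal{O}_K$, transporting that factorization to the $a_i$ by means of Theorem $2.1$ together with the fact that the group of nonzero fractional ideals of a Dedekind domain is torsion-free. To begin, if $n\nmid i$ then $a_i$ is a unit by Theorem $2.1(a)$, so $(a_i)=\mathcal{O}_K$ is vacuously square-free; it remains only to treat the indices $i$ with $1\le i\le m+n-1$ and $n\mid i$, of which there is at least one (take $i=n$, using $m\ge 2$).

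The next step is to record the factorization of $(d)$. Let $H(c)\in\mathbb{Z}[c]$ be the monic minimal polynomial of $c_0$, an irreducible factor of $G_{d,m,n}(c)$. The argument proving Theorem $1.4$ shows, more precisely, that $\overline{H}(c)=\bigl(\prod_{i\in S}f_i(c)\bigr)^{M_{m,n}}$ for some subset $S$ of the distinct irreducible factors $f_1,\dots,f_k$ of $\overline{G_{d,0,n}}(c)\in\mathbb{F}_d[c]$; in particular $\overline{H}$ is the $M_{m,n}$-th power of the \emph{separable} polynomial $\prod_{i\in S}f_i$. Since $d\nmid[\mathcal{O}_K:\mathbb{Z}[c_0]]$ by Theorem $1.9$, Dedekind's Factorization Theorem gives
$$(d)=\prod_{i\in S}\mathfrak{p}_i^{M_{m,n}},\qquad \mathfrak{p}_i:=\bigl(d,\tilde f_i(c_0)\bigr),$$
with the $\mathfrak{p}_i$, $i\in S$, pairwise distinct prime ideals; thus $\prod_{i\in S}\mathfrak{p}_i$ is a square-free ideal whose $M_{m,n}$-th power equals $(d)$.

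Finally, I would transport this to the orbit: for $i$ with $1\le i\le m+n-1$ and $n\mid i$, Theorem $2.1(b)$ gives $(a_i)^{M_{m,n}}=(d)=\bigl(\prod_{j\in S}\mathfrak{p}_j\bigr)^{M_{m,n}}$, and since the ideal group of $\mathcal{O}_K$ is free abelian, hence torsion-free, this forces $(a_i)=\prod_{j\in S}\mathfrak{p}_j$. Hence $(a_i)$ is a product of distinct prime ideals, i.e.\ $a_i$ is square-free in $\mathcal{O}_K$ (and, incidentally, all the $a_i$ with $n\mid i$ then generate one and the same ideal). Together with the unit case, this proves the corollary. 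One could alternatively obtain $(a_n)=\prod_{i\in S}\mathfrak{p}_i$ directly from Proposition $2.6$, using Lemma $2.4$ to see that $(d,\tilde f_i(c_0))$ is the unit ideal when $i\notin S$.

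I do not expect a genuine obstacle, since the hard inputs (Theorems $1.9$ and $2.1$) are already in hand. The one point that needs care is extracting from the proof of Theorem $1.4$ not merely that $\overline{H}$ is a perfect $M_{m,n}$-th power, but that its $M_{m,n}$-th root is separable: this separability is precisely what guarantees that the factor $\prod_{i\in S}\mathfrak{p}_i$ of $(d)$ is square-free, and therefore that each $(a_i)$ remains square-free once one divides the exponents by $M_{m,n}$ in the ideal group.
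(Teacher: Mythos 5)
Your argument is correct, and all the inputs you rely on (Theorem $2.1$, the refinement from the proof of Theorem $1.4$ that $\overline{H}$ is the $M_{m,n}$-th power of a product of \emph{distinct} irreducibles, and Theorem $1.9$) are available without circularity, since Theorem $1.9$ is proved independently of this corollary. The route differs from the paper's in a noticeable way. The paper first reduces to $a_n$ (noting $(a_{nk})=(a_n)$), then invokes Proposition $2.6$ to write $(a_n)=(d,\tilde f_1(c_0))\cdots(d,\tilde f_k(c_0))$ over \emph{all} irreducible factors of $\overline{G_{d,0,n}}$, prunes the unit factors using Lemma $2.5$, and only then applies Theorem $1.9$ plus Dedekind's Factorization Theorem to see that the surviving factors are distinct primes. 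You bypass Proposition $2.6$ and Lemma $2.5$ entirely: Dedekind's theorem applied to $\overline{H}=\bigl(\prod_{i\in S}f_i\bigr)^{M_{m,n}}$ gives the square-free-to-the-$M_{m,n}$ shape of $(d)$ directly, and then the identity $(a_i)^{M_{m,n}}=(d)$ from Theorem $2.1$(b), together with unique factorization of ideals (torsion-freeness of the ideal group), forces $(a_i)=\prod_{i\in S}\mathfrak{p}_i$ for every periodic index at once. This is arguably cleaner and treats all $a_i$ with $n\mid i$ uniformly, at the cost of leaning on Theorem $1.9$ as an essential ingredient rather than only for the final distinctness step; the paper's use of Proposition $2.6$ keeps the identification of $(a_n)$ independent of the index computation, with Theorem $1.9$ entering only to upgrade the factors to distinct primes. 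Your closing remark correctly isolates the one delicate point (separability of the $M_{m,n}$-th root of $\overline{H}$), which is exactly what the paper's equation $(3.15)$ records; the only slip is cosmetic: the pruning in your alternative remark is what the paper does via Lemma $2.5$ (Lemma $2.4$ is the norm criterion underlying it).
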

\begin{proof}
Note that if $n\not{|}$ $i$, then $a_i$ is a unit in $\mathcal{O}_K$ by Theorem $2.1$, so there is nothing to prove. We also know from Theorem $2.1$ that $(a_n)=(a_{nk})$ in $\mathcal{O}_K$ for any $k\geq 1$, so it is enough to prove that $a_n$ is square-free in $\mathcal{O}_K$. Recall from Proposition $2.6$ that we have
\begin{equation}
(a_n) = (d,\tilde{f_1}(c_0))\cdots (d,\tilde{f_k}(c_0)),
\end{equation}
where $\tilde{f}_1,\dots,\tilde{f}_k$ are some lifts of the irreducible factors $f_1,\dots,f_k\in \mathbb{F}_d[c]$ of the reduced polynomial $\overline{G_{d,0,n}}(c)\in \mathbb{F}_d[c]$. Let $H(c)\in \mathbb{Z}[c]$ be the minimal polynomial of $c_0$. It is clear from the proof of Theorem $1.4$ that \begin{equation}
\overline{H}(c)=[f_{i_1}(c)\dots f_{i_t}(c)]^{M_{m,n}}
\end{equation}
for some $i_1,\dots, i_t\in \{1,\dots,k\}$. Then, Lemma $2.5$ implies that for any $j\in \{1,\dots k\}$, $(d,\tilde{f}_j(c_0))$ is not the unit ideal if and only if $j\in \{i_1,\dots, i_t\}$. Hence, we can rewrite (3.14) as
\begin{equation}
(a_n) = (d,\tilde{f}_{i_1}(c_0))\cdots (d,\tilde{f}_{i_t}(c_0)).
\end{equation}
But, since we know from Theorem $1.9$ that $d\not{|}\text{ }[\mathcal{O}_K:\mathbb{Z}[c_0]]$, combining (3.15) with Dedekind's Factorization Theorem will imply that $(d,\tilde{f}_{i_1}(c_0)),\dots, (d,\tilde{f}_{i_t}(c_0))$ are distinct prime ideals in $\mathcal{O}_K$, which proves that $a_n$ is square-free in $\mathcal{O}_K$, as desired.
\end{proof}
\begin{remark}
The author's interest in Corollary $3.5$ comes from the questions related to the irreducibility of iterates of polynomials. For a field $K$, we call a polynomial $f(x)\in K[x]$ \textbf{stable} if all of its iterates are irreducible over $K$. In our special case, it is known that $f_{c,d}$ is stable if the critical orbit of $f_{c,d}$ does not contain $\pm d$th power (Theorem $8$, \cite{Hamblen}). Corollary $3.5$ implies that non-unit elements in the orbit cannot be $\pm d$th power. This establishes stability in the case $n=1$, because in that case there is no unit in the orbit (by Theorem $2.1$). This was already proven in (Corollary 1.2, \cite{G}). In other words, Corollary $3.5$ can be thought of as a mild generalization of (Corollary 1.2, \cite{G}).
\end{remark}
     \subsection*{Acknowledgments}
     The author would like to thank Nigel Boston for helpful conversations related to this work. The author also thanks Rafe Jones for the helpful conversation during one of his visits to UW-Madison. It is also a pleasure to thank Rafe Jones for useful feedback on an early draft of this paper.
	
		
		
		
		\baselineskip=17pt



\begin{thebibliography}{9}
	\bibitem[1]{Benedetto} R. Benedetto, L. DeMarco, P. Ingram, R. Jones, M. Manes, J. H. Silverman, T.J. Tucker. Current trends and open problems in arithmetic dynamics, 2018. To appear in Bulletin of American Mathematical Society, arXiv: 1806.04980.
		
	\bibitem[2]{Buff} X.Buff. On postcritically finite unicritical polynomials. New York J. Math. 24, 1111–1122, 2018.
	
	\bibitem[3]{BEK} X.Buff; A.Epstein; S.Koch. Rational maps with a preperiodic critical point, Preprint. arXiv:1806.11221.
	
	\bibitem[4]{Dedekind} R.Dedekind. Uber den Zusammenhang zwischen der Theorie der Ideale und der Theorie der h\"{o}heran Kongruenzen. G\"{o}ttingen Abhandlungen 23: 1-23, 1878.
	
	\bibitem[5]{G} V.Goksel. On the orbit of a post-critically finite polynomial of the form $x^d+c$, 2018. To appear in Functiones et Approximatio Commentarii Mathematici, arXiv: 1806.01208.
	
	\bibitem[6]{Hamblen}  S.Hamblen; R.Jones; K.Madhu. The density of primes in orbits of $z^d+c$. Int. Math. Res.
	Not. IMRN, no. 7, 1924-1958, 2015
	
	\bibitem[7]{Hutz}	B.Hutz; A.Towsley. Misiurewicz points for polynomial maps and transversality. New York J. Math. 21, $297\text{-}319$, 2015.
	
	\bibitem[8]{Lang}  S. Lang. Algebraic Number Theory, Volume 110 of Graduate Texts in Mathematics. Springer-Verlag, New York, second edition, 1994.
	
	\bibitem[9]{Looper} N.R. Looper, Dynamical Galois groups of trinomials and Odoni's conjecture, 2016. To appear in Bulletin of London Mathematical Society, arXiv:1609.03398.
	
	\bibitem[10]{Marcus} D.A. Marcus: Number Fields. Springer, New York, 1977.
	
	\bibitem[11]{Milnor} J.Milnor. Arithmetic of unicritical polynomial maps, Frontiers in Complex Dynamics: In Celebration of John Milnor$\text{'}$s 80th Birthday 15-23, 2012.
	
	\bibitem[12]{Silverman} J.H. Silverman. The arithmetic of dynamical systems, volume 241 of Graduate Texts in Mathematics.
	Springer, New York, 2007.

	\end{thebibliography}
\end{document}